\newtheorem{defn}{Definition}[section]
\newtheorem{lem}{Lemma}[section]
\newtheorem{thm}{Theorem}[section]
\newtheorem{assum}{Assumption}[section]
\newtheorem{remark}{Remark}[section]
\newtheorem{corr}{Corollary}[section]
\newtheorem{exam}{Example}[section]
\journal{arXiv}
\begin{document}

\begin{frontmatter}

\title{Stability of stochastic differential equations with respect to time-changed Brownian motions}



\author{Qiong Wu\corref{mycorrespondingauthor}}
\cortext[mycorrespondingauthor]{Corresponding author}
\ead{Qiong.Wu@tufts.edu}

\address{Tufts University, \\Department of Mathematics, 503 Boston Avenue, Medford, MA 02155, USA.}

\begin{abstract}
In this paper, the stability behaviors of stochastic differential equations (SDEs) driven by time-changed Brownian motions are discussed. Based on the generalized Lyapunov method and stochastic analysis, necessary conditions are provided for solutions of time-changed SDEs to be stable in different senses, such as stochastic stability, stochastically asymptotic stability and globally stochastically asymptotic stability. Also, a connection between the stability of the solution to the time-changed SDEs and
that to their corresponding non-time-changed SDEs is revealed by applying the duality theorem. Finally, two examples are provided to illustrate the theoretical results. 
\end{abstract}

\begin{keyword}
	Inverse subordinator; Time-changed Brownian motions; Time-changed stochastic differential equations (SDEs); Stochastic stability; Stochastically asymptotic stability; Globally stochastically asymptotic stability; 
\end{keyword}

\end{frontmatter}


\section{Introduction}
Stochastic differential equations (SDEs) play an important role in modeling dynamical systems when taking into account uncertainty noise in areas such as physics, biology, economics and finance, see~\cite{Mircea2002, allen2007modeling, shreve2004stochastic, steele2001stochastic}.  One of the most important qualitative behaviors of SDEs is the stability of their solutions, which has been well studied by~\cite{arnold1974stochastic,friedman2014stochastic,khasminskii2011stochastic,mao1994exponential,mao2007stochastic}. When considering SDEs driven by a semimartingale which is the largest class of driven processes for It\^o integrals, Mao studied several kinds of stable behaviors, specifically, exponential stability for linear and nonlinear SDEs~\cite{mao1990exponential,mao1989exponential}, polynomial stability for perturbed SDEs~\cite{mao1992polynomial}, and eventual uniform asymptotic stability for SDEs with a spatial parameter~\cite{mao1991eventual}. Stochastic delay differential equations (SDDEs), also known as stochastic functional differential equations, are a natural generalization of SDEs. They take account into the effects of past behaviors of the system that are imposed on the system's current status. The corresponding stability of solutions to SDDEs has been studied in~\cite{liao2000exponential,mao2000lasalle,ivanov1999stochastic}. To consider the phenomenon of several stochastic systems switching among each other according to the movement of a Markov chain,  a new type of system called an SDE with Markovian switch has been studied in~\cite{mao2000stochastic,mao2006stochastic}. The associated stability analysis{\tiny } of this type of Markovian switching stochastic system is also discussed in~\cite{mao1999stability,mao2000robustness}. \label{key}

Recently, Kei Kobayashi~\cite{Kei2011} has studied the following new type of SDEs driven by a time-changed semimartingale:
\begin{eqnarray}\label{timechangedSDEwrtsemimartingale}
\mathrm{d}X(t) = \rho(t, E_{t}, X(t-))\mathrm{d}t + f(t, E_{t}, X(t-))\mathrm{d}E_{t} + g(t, E_{t}, X(t-))\mathrm{d}Z_{E_{t}},~X(0) = x_{0},
\end{eqnarray}
where $Z_{t}$ is a semimartingale and $E_{t}$ is the time change which is an inverse of a $\beta$-stable subordinator, known as the hitting time (first passage time) of a $\beta$-stable subordinator. For more information about a $\beta$-subordinator and its inverse, see~\cite{bertoin1999subordinators,meerschaert2013inverse}. Since the time change $E_{t}$ and the time-changed semimartingale $Z_{E_{t}}$ in~\eqref{timechangedSDEwrtsemimartingale} are both semimartingales with respect to an appropriate filtration, the time-changed SDE~\eqref{timechangedSDEwrtsemimartingale} is well-posed in the sense of It\^o calculus.
Also, since the inverse process (time change $E_{t}$) remains constant over jump intervals of the subordinator and the duration of those resting periods is not exponential, the time change $E_{t}$ is not Markovian. This non-Markovian time change $E_{t}$ results in a time fractional Fokker-Planck-Kolmogorov (FPK)  equations associated with solutions to SDEs~\eqref{timechangedSDEwrtsemimartingale}. For example, the time fractional FPK equation being associated with the solution to SDE driven by a time-changed L\'evy processes~\cite{hahn2012sdes} and SDE driven by a time-changed Gaussian processes~\cite{hahn2011time} have been studied, respectively. These papers successfully provide a bridge between time fractional FPK 
equations and their corresponding SDEs. 

The SDEs driven by time-changed semimartingales in~\eqref{timechangedSDEwrtsemimartingale} are different from SDEs with respect to semimartingales in~\cite{mao1989exponential,mao1990exponential} where the stabilities of solutions to such SDEs are discussed. In these papers, the SDE with respect to a semimartingale are as the form:
\begin{eqnarray*}
	\mathrm{d}X(t) = AX(t)\mathrm{d}\mu(t) + G(X(t), t)\mathrm{d}M(t),
\end{eqnarray*}
where the semimartingale $M(t) = (M_{1}(t), M_{2}(t), \cdots, M_{m}(t))^{T}$ has the nice quadratic variation property:
\begin{eqnarray}\label{quadraticvariationprocessinMao'spaper}
	\langle M _{i}, M_{j}\rangle(t) = \int_{0}^{t} K_{ij}(s)\mathrm{d}\mu(s),
\end{eqnarray}
where $\mu(s)$ is a continuous nondecreasing process bounded by linear functions. However, even the semimartingale in~\eqref{timechangedSDEwrtsemimartingale} is the nice standard Brownian motion $B_{t}$, its corresponding time-changed Brownian motion $B_{E_{t}}$ dose not satisfy the property~\eqref{quadraticvariationprocessinMao'spaper}. Although the time-changed Brownian motion $B_{E_{t}}$ is a square integrable martingale~\cite{Marcin2010}, its quadratic variation process $E_{t}$ satisfies~\cite{Kei2014}
\begin{eqnarray*}
	\mathbb{E}(E^{n}_{t}) = \frac{n!}{\Gamma(n\beta + 1)}t^{n\beta},
\end{eqnarray*}
where $\Gamma(\cdot)$ is the Gamma function.

In this paper, we study the stability of solutions to the following SDEs which are driven by the time-changed Brownian motion $B_{E_{t}}$
\begin{eqnarray}\label{genearltimechangedsde}
\mathrm{d}X(t) = \rho(t, E_t, X(t))\mathrm{d}t + f(t, E_{t}, X(t))\mathrm{d}E_{t} + g(t, E_t, X(t))\mathrm{d}B_{E_t}
\end{eqnarray}
with a constant initial value $X(0) = x_0$, where functions $\rho, f$ and $g$ $: {\bf R_{+}}\times {\bf R_{+}}\times {\bf R} \to {\bf R}$ are appropriately specified later. 
By applying the time-changed It\^o formula and generalizing a Lyapunov function method discussed in~\cite{mao2007stochastic,mao1991stability}, the stochastic stability, stochastically asymptotic stability and globally stochastically asymptotic stability are discussed. A deep connection between stability of solutions to time-changed SDEs and that of non-time-changed SDEs is derived by applying the duality theorem. Finally, in order to illustrate the theory in this paper, two examples are provided.

\section{Time-changed stochastic differential equations}\label{Pre}
Throughout this paper, let  $(\Omega, \mathcal{F}, (\mathcal{F}_t)_{t\geq 0}, \mathbb{P})$ be a complete filtered probability space with a filtration $(\mathcal{F}_{t})_{t\geq 0}$ which satisfies the usual conditions, i.e., $(\mathcal{F}_{t})_{t\geq 0}$ is right-continuous and $\mathcal{F}_0$ contains all the $\mathbb{P}$-null sets in $\mathcal{F}$.  

An $\mathcal{F}_{t}$ adapted process $E_{t}$ is called a time change if it is a nondecreasing family of $\mathcal{F}_t$ adapted stopping times with c\`{adl\`{a}g} paths which means the sample paths are right-continuous with left limits. Also, if each stopping time is finite almost surely, the time change is said to be finite. Let $E_{t}$ be a finite $\mathcal{F}_{t}$ adapted time change. Definite a new filtration $\mathcal{G}_{t}$ as
\begin{eqnarray*}
	\mathcal{G}_{t} = \mathcal{F}_{E_{t}},
\end{eqnarray*}
which also satisfies the usual conditions because of the right-continuity of $\mathcal{F}_{t}$ and $E_{t}$. 

A subordinator is a one-dimensional L\'evy process with nondecreasing sample paths. For a L\'evy process $U(t)$ on $\bf R$, it is a subordinator if and only if its L\'evy triple takes the form $(b, 0, \nu)$ where $b$ is a nonnegative constant and $\nu$ is a measure on $\bf R$ satisfying 
\begin{eqnarray*}
\nu(-\infty, 0) = 0~\text{and}~ \int_{0}^{\infty}(1\wedge x)\nu(\mathrm{d}x) < \infty.
\end{eqnarray*}
 In particular, a stable subordinator with index $\beta\in(0, 1)$ is a one-dimensional strictly increasing L\'evy process denoted by $U_{\beta}(t)$ and characterized by Laplace transform
\begin{eqnarray}\label{laplacetransformofstableordinator}
	\mathbb{E}[\exp(-sU_{\beta}(t)] = \exp(-ts^{\beta}),  ~~~s\geq 0.
\end{eqnarray}
For more details on L\'evy processes and stable subordinators, please see\cite{Jean1998, applebaum2009levy, Janicki1994}. For an $\mathcal{F}_{t}$ adapted $\beta$-stable subordinator $U_{\beta}(t)$ with Laplace transform as in~\eqref{laplacetransformofstableordinator}, define its generalized inverse $\beta$-stable subordinator as
\begin{eqnarray}\label{definitionofinversesubordinator}
E_{t} := E_{t}^{\beta} = \inf\{s > 0: U_{\beta}(s) > t\},
\end{eqnarray}
which is also called the first hitting time process. Since the $\beta$-stable subordinator $U_{\beta}(t)$ is strictly increasing, its inverse $\beta$-stable subordinator $E_t$ is continuous and nondecreasing. Particularly, the flat periods of the trajectories of $E_{t}$ correspond to the jumps of the $\beta$-stable subordinator $U_{\beta}(t)$. According to~\eqref{definitionofinversesubordinator},  $E_t$ is a stopping time adapted to the filtration $\mathcal{F}_t$, which means $E_{t}$ is an $\mathcal{F}_{t}$ adapted time change.

Let $B_t$ be a standard Brownian motion independent of $E_t$. Define the special filtration $\mathcal{F}_{t}$ as
\begin{eqnarray}\label{filtrationgeneratedbyBandE}
	\mathcal{F}_t = \bigcap\limits_{s > t}\bigg\{\sigma [B_r: 0\leq r \leq s]\vee \sigma [E_r: r \geq 0]\bigg\},
\end{eqnarray}
where $B_{r}$ is the standard Brownian motion and the notation $\sigma_1 \vee \sigma_2$ denotes the $\sigma$-algebra generated by the union of the $\sigma$-algebras $\sigma_1$ and $\sigma_2$. The \cite{Marcin2010} concludes that the time-changed Brownian motion $B_{E_{t}}$ is a square integrable martingale with respect to the filtration $\{\mathcal{G}_t\}_{t\geq 0}$  where $\mathcal{G}_{t} = \mathcal{F}_{E_t}$. Also in~\cite{metzler1999anomalous, metzler2000random}, the transition density function $p(x, t)$ of the time-changed Brownian motion is shown to  satisfy the following fractional diffusion equation
\begin{eqnarray*}
	\frac{\partial p(x, t)}{\partial t} = D_{t}^{1-\beta}\bigg\{\frac{1}{2}\frac{\partial^{2}}{\partial x^{2}}\bigg\}p(x, t),~~p(x, 0) = \delta(x),
\end{eqnarray*}
where $D_{t}^{1-\beta}$ is the Riemann-Liouville fractional derivative defined in term of gamma function $\Gamma(\cdot)$
\begin{eqnarray*}
	D_{t}^{1-\beta}f(t) = \frac{1}{\Gamma(\beta)}\frac{\mathrm{d}}{\mathrm{d}t}\int_{0}^{t}(t - s)^{\beta - 1}f(s)\mathrm{d}s.
\end{eqnarray*}

Now consider the stochastic time-changed differential equation of the form in~\eqref{genearltimechangedsde} where functions $\rho, f$ and $g$ $: {\bf R_{+}}\times {\bf R_{+}}\times {\bf R} \to {\bf R}$ satisfy the following Lipschitz condition as well as a technical assumption.
\begin{assum}\label{assump1}(\cite{Kei2011})
{\bf Lipschitz condition}: there exists a positive constant $K$ such that
\begin{eqnarray*}
	|\rho(t_1, t_2, x) - \rho(t_1, t_2, y)| + |f(t_1, t_2, x) - f(t_1, t_2, y) | + |g(t_1, t_2, x) - g(t_1, t_2, y) | \leq K|x - y|
\end{eqnarray*}
for all $t_1, t_2\in {\bf R_{+}}$ and $x, y\in {\bf R}$. 
\end{assum}

\begin{assum}\label{assump2}(\cite{Kei2011})
	{\bf Technical condition}: if $X(t)$ is a  c\`{adl\`{a}g} and $\mathcal{G}_t$-adapted process, then
\begin{eqnarray*}
	\rho(t, E_t, X(t)), f(t, E_t, X(t)), g(t, E_t, X(t)) \in \mathbb{L}(\mathcal{G}_t),
\end{eqnarray*}
	where $\mathcal{G}_{t} = \mathcal{F}_{E_{t}}$ and $\mathbb{L}(\mathcal{G}_t)$ denotes the class of c\`{agl\`{a}d} (i.e., sample paths which are left-continuous with right limits) and $\mathcal{G}_t$-adapted processes.
\end{assum}
One example of such functions is a linear function $\rho(t_{1}, t_{2}, x) = \rho_{1}(t_{1}, t_{2})x$ where $\rho_{1}(t_{1}, t_{2})$ is a bounded continuous function on ${\bf R_{+}}\times{\bf R_{+}}$. Under Assumptions~\ref{assump1} and~\ref{assump2}, Kobayashi establishes the following lemma in~\cite{Kei2011} on the existence and uniqueness of solutions to the time-changed SDEs~\eqref{genearltimechangedsde}. 
\begin{lem}\label{existenceanduniquenesstheorem}(\cite{Kei2011} Existence and Uniqueness of Solutions)
	Let $E_{t}$ be the inverse of the $\beta$-stable subordinator $U_{\beta}(t)$. Suppose $\rho, f, g$ are real-valued functions defined on ${\bf R_{+}}\times{\bf R_{+}}\times{\bf R}$ satisfying Lipschitz condition~\ref{assump1} and technical condition~\ref{assump2}. Then there exists a unique $\mathcal{G}_{t}$ adapted process $X(t)$ satisfying the time-changed SDE~\eqref{genearltimechangedsde}.
\end{lem}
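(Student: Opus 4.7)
The plan is to reduce the problem to a classical SDE driven by ordinary semimartingales via the duality between time-changed and non-time-changed SDEs, and then invert that duality. Concretely, I would first introduce the companion (non-time-changed) SDE
\begin{equation*}
\mathrm{d}Y(s) = \rho(U_\beta(s-), s, Y(s-))\,\mathrm{d}U_\beta(s) + f(U_\beta(s-), s, Y(s-))\,\mathrm{d}s + g(U_\beta(s-), s, Y(s-))\,\mathrm{d}B_s,
\end{equation*}
with $Y(0)=x_0$, adapted to $\mathcal{F}_s$. Because the Lipschitz estimate from Assumption~\ref{assump1} is uniform in $(t_1,t_2)$, the randomness introduced by $U_\beta(s-)$ does not spoil the Lipschitz-in-$Y$ structure of the coefficients; the driving processes $B$ and $U_\beta$ are both $\mathcal{F}_s$-semimartingales (the subordinator is nondecreasing càdlàg of finite variation on compacts, and $B$ is a continuous martingale independent of $E_\cdot$). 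Existence and uniqueness of a càdlàg $\mathcal{F}_s$-adapted solution $Y(s)$ then follows from the standard Picard iteration for SDEs with Lipschitz coefficients driven by semimartingales (as in Protter's book).

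Next, I would define $X(t):=Y(E_t)$ and verify that it solves~\eqref{genearltimechangedsde}. Because $E_t$ is an $\mathcal{F}_t$-stopping time, $X(t)$ is $\mathcal{G}_t = \mathcal{F}_{E_t}$-adapted, and Assumption~\ref{assump2} ensures the coefficients evaluated along $X$ lie in $\mathbb{L}(\mathcal{G}_t)$. Then I would invoke the time-change formula for stochastic integrals (this is the duality theorem quoted later in the paper): applied to each of the three integrals obtained by integrating the companion SDE from $0$ to $E_t$, the substitution $s = E_u$, together with the identity $U_\beta(E_u)=u$ off the flat pieces of $E$, converts them into $\int_0^t \rho(u,E_u,X(u))\,\mathrm{d}u$, $\int_0^t f(u,E_u,X(u))\,\mathrm{d}E_u$, and $\int_0^t g(u,E_u,X(u))\,\mathrm{d}B_{E_u}$ respectively. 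This yields existence. For uniqueness, given any two solutions $X,\tilde{X}$ of~\eqref{genearltimechangedsde}, I would set $Y(s)=X(U_\beta(s))$ and $\tilde{Y}(s)=\tilde{X}(U_\beta(s))$; applying the duality in the reverse direction shows both satisfy the companion SDE, so uniqueness at the $Y$ level (already established) forces $X=\tilde{X}$ after re-composing with $E_t$.

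The main obstacle I expect is the careful bookkeeping in the duality step, rather than the Picard iteration. Specifically, justifying that the time-changed stochastic integral against $\mathrm{d}B_{E_t}$ coincides with the ordinary It\^o integral against $\mathrm{d}B_s$ after the substitution $s=E_u$ requires controlling the behavior on the (uncountably many) flat intervals of $E_t$, where $\mathrm{d}E_u=0$ absorbs all contributions from the jumps of $U_\beta$, and verifying that the coefficient processes are the correct càglàd integrands with respect to both filtrations $\mathcal{F}_s$ and $\mathcal{G}_t$. Once this accounting is done, the rest is a routine combination of standard SDE theory with the change-of-time identities.
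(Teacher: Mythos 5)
There is a genuine gap, and it is the duality step itself. The duality of Lemma~\ref{duality} (Kobayashi's theorem) applies only to the driftless-in-$\mathrm{d}t$ equation~\eqref{timesde}, where the coefficients depend on $E_t$ alone and the only integrators are $\mathrm{d}E_t$ and $\mathrm{d}B_{E_t}$; it does not extend to the general equation~\eqref{genearltimechangedsde}, and your proposed companion SDE does not repair this. The problematic conversion is $\int_0^{E_t}\rho(U_\beta(s-),s,Y(s-))\,\mathrm{d}U_\beta(s)\;\mapsto\;\int_0^t\rho(u,E_u,X(u))\,\mathrm{d}u$: the change-of-time identity fails here because of the overshoot of the subordinator. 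Concretely, take $\rho\equiv 1$, $f\equiv g\equiv 0$. Then~\eqref{genearltimechangedsde} gives $X(t)=x_0+t$, while your companion equation gives $Y(s)=x_0+U_\beta(s)$ and hence $Y(E_t)=x_0+U_\beta(E_t)$, which is strictly larger than $x_0+t$ almost surely for each fixed $t>0$ (a $\beta$-stable subordinator crosses the level $t$ by a jump, so $U_\beta(E_t)>t$). The same phenomenon breaks the uniqueness half: a solution of~\eqref{genearltimechangedsde} continues to evolve through the $\rho\,\mathrm{d}t$ term during the flat periods of $E_t$ (the jump intervals $[U_\beta(s-),U_\beta(s)]$), so it is not constant there, the composition $Y(s)=X(U_\beta(s))$ discards exactly that portion of the path, and one cannot recover $X$ from $Y$ by composing with $E_t$. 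In short, no classical SDE in the intrinsic time $s$ of the subordinator can encode the $\mathrm{d}t$-drift of~\eqref{genearltimechangedsde}, which is precisely why the cited duality is stated only for the pair \eqref{classicsde}--\eqref{timesde}.

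The route that actually works (and is the one behind the cited result in \cite{Kei2011}) is direct, not via duality: with respect to $\mathcal{G}_t=\mathcal{F}_{E_t}$, the three integrators $t$, $E_t$ and $B_{E_t}$ are semimartingales ($t$ and $E_t$ are continuous, nondecreasing and adapted, and $B_{E_t}$ is a square-integrable $\mathcal{G}_t$-martingale by the synchronization of $B$ with $E$), so~\eqref{genearltimechangedsde} is an SDE driven by a finite family of semimartingales. Assumption~\ref{assump1} makes the coefficient maps functional Lipschitz, Assumption~\ref{assump2} guarantees that $\rho(t,E_t,X(t))$, $f(t,E_t,X(t))$, $g(t,E_t,X(t))$ are c\`agl\`ad $\mathcal{G}_t$-adapted integrands, and then the standard existence and uniqueness theorem for SDEs driven by semimartingales (Protter, Theorem~V.7), i.e.\ a Picard iteration carried out at the level of the time-changed equation itself, yields a unique $\mathcal{G}_t$-adapted solution. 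Your Picard-iteration instinct is the right one, but it must be applied to~\eqref{genearltimechangedsde} directly rather than to a dual equation that does not exist in this generality.
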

The next lemma from~\cite{Kei2011} reveals a deep connection between the following classical It\^{o} SDE and its corresponding time-changed SDE. Consider
\begin{eqnarray}\label{classicsde}
	\mathrm{d}Y(t) = f(t, Y(t))\mathrm{d}t + g(t, Y(t))\mathrm{d}B_t, ~~with ~ Y(0) = x_0,
\end{eqnarray}
\begin{eqnarray}\label{timesde}
	\mathrm{d}X(t) = f(E_{t}, X(t))\mathrm{d}E_{t} + g(E_t, X(t))\mathrm{d}B_{E_t}, ~~with ~ X(0) = x_0.
\end{eqnarray}

\begin{lem}\label{duality}
	(\cite{Kei2011} Duality of SDEs) Let $E_{t}$ be the inverse of a $\beta$-stable subordinator $U_{\beta}(t)$.
	\begin{enumerate}
		\item If a process $Y(t)$ satisfies SDE~\eqref{classicsde}, then $X(t) := Y(E_{t})$ satisfies the time-changed SDE~\eqref{timesde}.
		\item If a process $X(t)$ satisfies the time-changed SDE~\eqref{timesde}, then $Y(t) := X(U(t))$ satisfies the classical SDE \eqref{classicsde}.
	\end{enumerate}
\end{lem}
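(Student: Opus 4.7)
The plan is to prove both parts via the change-of-variable (time-change) formula for stochastic integrals, leveraging that $E_t$ is continuous and nondecreasing so Lebesgue--Stieltjes integrals transform cleanly, while the stochastic integral with respect to $B$ transforms to one with respect to $B_{E}$ by the Dambis--Dubins--Schwarz/time-change theorem for martingales. The central identity I will use repeatedly is that $E$ and $U$ are right-continuous inverses: $E_{U_\beta(t)} = t$ for all $t \ge 0$ almost surely, while $U_\beta(E_t) = t$ only on the complement of the flat intervals of $E$.

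For part (1), I would start from the integral form of \eqref{classicsde},
\begin{equation*}
Y(t) = x_0 + \int_0^t f(s, Y(s))\,\mathrm{d}s + \int_0^t g(s, Y(s))\,\mathrm{d}B_s,
\end{equation*}
and evaluate at $t \mapsto E_t$. Since $E_t$ is an $\mathcal{F}_t$-stopping time, the right-hand side is well-defined. The first (deterministic) integral transforms by the usual change of variables $s = E_u$, with $\mathrm{d}s = \mathrm{d}E_u$, giving $\int_0^t f(E_u, Y(E_u))\,\mathrm{d}E_u$. For the stochastic integral, I would invoke the time-change theorem for Brownian stochastic integrals (the version stated in Kobayashi \cite{Kei2011}): $\int_0^{E_t} g(s, Y(s))\,\mathrm{d}B_s = \int_0^t g(E_u, Y(E_u))\,\mathrm{d}B_{E_u}$, after verifying that $g(E_\cdot, Y(E_\cdot))$ lies in $\mathbb{L}(\mathcal{G}_t)$ thanks to Assumption~\ref{assump2} applied to $X(u) := Y(E_u)$. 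Substituting $X(t) = Y(E_t)$ then yields \eqref{timesde}.

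For part (2), I would run the argument in reverse. Starting from the integral form of \eqref{timesde} and evaluating at $t \mapsto U_\beta(t)$, I would write
\begin{equation*}
X(U_\beta(t)) = x_0 + \int_0^{U_\beta(t)} f(E_s, X(s))\,\mathrm{d}E_s + \int_0^{U_\beta(t)} g(E_s, X(s))\,\mathrm{d}B_{E_s}.
\end{equation*}
The change of variable $s = U_\beta(u)$ converts the first integral into $\int_0^t f(u, Y(u))\,\mathrm{d}u$ using $E_{U_\beta(u)} = u$ and $\mathrm{d}E_{U_\beta(u)} = \mathrm{d}u$, while the flat intervals of $E$ contribute zero to either side because the integrand $f(E_s, X(s))$ is constant along each such interval (as $X$ itself satisfies \eqref{timesde} with $\mathrm{d}E_t = 0$ and $\mathrm{d}B_{E_t} = 0$ on a flat piece). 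For the stochastic integral, the same time-change theorem applied in the opposite direction, together with $B_{E_{U_\beta(t)}} = B_t$, yields $\int_0^t g(u, Y(u))\,\mathrm{d}B_u$, recovering \eqref{classicsde}.

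I expect the main obstacle to be technical rather than structural: verifying the adaptedness and integrability conditions required to apply the time-change theorem for stochastic integrals, particularly handling the jumps of $U_\beta$ and the corresponding flat intervals of $E$ in part (2). One has to ensure that the integrands $g(E_s, X(s))$ are left-continuous and $\mathcal{G}_s$-adapted (which is exactly the content of Assumption~\ref{assump2}) so that the stochastic integrals are legitimate, and that evaluating at the stopping time $U_\beta(t)$ commutes with the integration. Once these measurability/integrability checks are in place, both directions reduce to a direct application of Kobayashi's time-change formula combined with the deterministic change-of-variable identity.
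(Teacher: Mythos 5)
The paper offers no proof of Lemma~\ref{duality}: it is imported verbatim from Kobayashi~\cite{Kei2011}, so there is no in-paper argument to compare your proposal against; the relevant benchmark is Kobayashi's original proof, and your outline is essentially a reconstruction of it. Writing \eqref{classicsde} and \eqref{timesde} in integral form, composing with $E_t$ (resp.\ $U_\beta(t)$), converting the Lebesgue term by the pathwise identity $\int_0^t\phi(E_u)\,\mathrm{d}E_u=\int_0^{E_t}\phi(s)\,\mathrm{d}s$ (valid since $E$ is continuous and nondecreasing), and converting the stochastic term by the change-of-variables formula for integrals under a time change, together with $E_{U_\beta(t)}=t$ and $B_{E_{U_\beta(t)}}=B_t$, is exactly the intended argument. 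Two remarks. First, the correct tool for the stochastic term is not the Dambis--Dubins--Schwarz theorem but Jacod's first change-of-variables formula as stated in~\cite{Kei2011}; its synchronization hypothesis --- that the integrator be constant on each interval $[U_\beta(u-),U_\beta(u)]$ --- holds here because $E$, and hence $B_{E_\cdot}$, is flat on those intervals, and you should say this explicitly rather than appeal to DDS. Second, your observation that part (2) goes through because $X$ is constant on the flat intervals of $E$ is the crucial structural point: it is precisely the absence of a $\mathrm{d}t$ drift in \eqref{timesde} (in contrast with the general equation \eqref{genearltimechangedsde}) that makes $X(U_\beta(t-))$ and $X(U_\beta(t))$ agree and lets the jump intervals of $U_\beta$ contribute nothing; with a $\mathrm{d}t$ term the duality would fail. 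Modulo the routine adaptedness and integrability checks you flag (supplied by Assumption~\ref{assump2}, the càglàd requirement of $\mathbb{L}(\mathcal{G}_t)$, and the fact that each $E_t$ is an $\mathcal{F}_t$-stopping time), your proposal is correct.
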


\begin{remark}
Throughout the remainder of the paper,  $X(t, x_0)$ always denotes the unique solution of the time-changed SDE~\eqref{genearltimechangedsde} with the initial value $X(0) = x_0$.
\end{remark}

\section{Stability of stochastic differential equations driven by a time-changed Brownian motion}
In this section, the stability of the solution to the time-changed SDE~\eqref{genearltimechangedsde} is studied. Based on Lyapunov method, some sufficient criteria are proposed to derive stochastically stable solutions to the time-changed SDE~\eqref{genearltimechangedsde}. Before proceeding to the stability analysis, the following useful lemmas, notations, and definitions are introduced.

Let $C^{1, 1, 2}({\bf R_{+}}\times {\bf R_{+}}\times {\bf R} ; {\bf R})$ denote the family of all functions $F(t_{1}, t_{2}, x)$ from ${\bf R_{+}}\times {\bf R_{+}}\times {\bf R}\to{\bf R}$ which are once differentiable in $t_{1} and t_{2}$ as well as continuously twice differentiable in $x$. Then, the generalized time-changed It\^o formula is introduced from~\cite{Qiong2016}.
\begin{lem}\label{timechangeditoformula}(\cite{Qiong2016})
	Suppose $U_{\beta}(t)$ is a $\beta$-stable subordinator and $E_{t}$ is its associated inverse stable subordinator. Let $X(t)$ be a $\mathcal{G}_t = \mathcal{F}_{E_t}$ ($\mathcal{F}_{t}$ defined in~\eqref{filtrationgeneratedbyBandE}) adapted process defined by
	\begin{eqnarray*}
		X(t) = x_{0} + \int_{0}^{t}P(s)\mathrm{d}s + \int_{0}^{t}\Phi(s)\mathrm{d}E_{s} + \int_{0}^{t}\Psi(s)\mathrm{d}B_{E_s},
	\end{eqnarray*}
	where $P, \Phi$ and $\Psi$ are measurable functions such that those integrals exist. If $F: {\bf R_{+}}\times {\bf R_{+}}\times {\bf R} \to {\bf R}$ is a $C^{1, 1, 2}({\bf R_{+}}\times {\bf R_{+}}\times {\bf R} ; {\bf R})$ function, then with probability one
	\begin{eqnarray*}
		\begin{aligned}
			F(t, E_{t}, X(t)) - F(0, 0, x_{0}) &= \int_{0}^{t}F_{t_{1}}(t, E_{s}, X(s))\mathrm{d}s + \int_{0}^{t}F_{t_{2}}(s, E_{s}, X(s))\mathrm{d}E_{s}\\ 
			&+ \int_{0}^{t}F_{x}(s, E_{s}, X(s))P(s)\mathrm{d}s + \int_{0}^{t}F_{x}(s, E_{s}, X(s))\Phi(s)\mathrm{d}E_{s}\\
			& +\int_{0}^{t}F_{x}(s, E_{s}, X(s))\Psi(s)\mathrm{d}B_{E_{s}} + \frac{1}{2}\int_{0}^{t}F_{xx}(s, E_s, X(s))\Psi^{2}(s)\mathrm{d}E_s,
		\end{aligned}
	\end{eqnarray*} 
	where $F_{t_{1}}$, $F_{t_{2}}$ and $F_{x}$ are first derivatives, respectively, and $F_{xx}$ denotes the second derivative.
\end{lem}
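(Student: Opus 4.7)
The plan is to view the triple $Z(s) := (s, E_s, X(s))$ as a three--dimensional continuous semimartingale and apply the classical multidimensional It\^o formula to $F$. Continuity of $E_s$ follows from the strict monotonicity of $U_\beta$; continuity of $X$ follows because each of the three driving processes on the right-hand side of the defining equation for $X$ has continuous paths (note that $B_{E_s}$ is continuous since $B$ and $E$ are both continuous). Under this setup, the standard It\^o expansion reads
\begin{eqnarray*}
F(t, E_t, X(t)) - F(0, 0, x_0) &=& \int_0^t F_{t_1}(s, E_s, X(s))\,\mathrm{d}s + \int_0^t F_{t_2}(s, E_s, X(s))\,\mathrm{d}E_s \\
&& + \int_0^t F_x(s, E_s, X(s))\,\mathrm{d}X(s) + \frac{1}{2}\int_0^t F_{xx}(s, E_s, X(s))\,\mathrm{d}\langle X\rangle_s,
\end{eqnarray*}
where all mixed second--order terms involving $s$ or $E_s$ vanish: the argument $s$ is deterministic, and $E_s$ is continuous and nondecreasing, hence of finite variation, so its quadratic covariation with any continuous semimartingale is zero.

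Next I would identify the two nontrivial ingredients. First, by associativity of the stochastic integral,
\begin{eqnarray*}
\int_0^t F_x\,\mathrm{d}X(s) = \int_0^t F_x(s, E_s, X(s)) P(s)\,\mathrm{d}s + \int_0^t F_x\,\Phi\,\mathrm{d}E_s + \int_0^t F_x\,\Psi\,\mathrm{d}B_{E_s}.
\end{eqnarray*}
Second, the $P$ and $\Phi$ contributions to $X$ are of finite variation and therefore contribute nothing to $\langle X\rangle$, while the martingale part $\int_0^\cdot \Psi\,\mathrm{d}B_{E_s}$ has quadratic variation $\int_0^\cdot \Psi^2(s)\,\mathrm{d}\langle B_E\rangle_s = \int_0^\cdot \Psi^2(s)\,\mathrm{d}E_s$, using the fact that the time--changed Brownian motion $B_{E_\cdot}$ is a square--integrable $\mathcal{G}_t$--martingale with $\langle B_E\rangle_t = E_t$ (\cite{Marcin2010}). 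Substituting these two identities into the expansion yields exactly the stated formula.

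The main obstacle is justifying the invocation of the multidimensional It\^o formula given the unusual nature of $E_s$, which is continuous but has intervals of constancy (the plateaus corresponding to jumps of $U_\beta$). Although $E_s$ is not absolutely continuous, it is still continuous and of bounded variation on compacts, which is all that the general semimartingale It\^o formula requires. A cleaner backup strategy, if one wants to avoid invoking the semimartingale It\^o formula directly, is to apply the duality of Lemma~\ref{duality}: set $Y(r) := X(U_\beta(r))$, use the time--change identities $\int_0^{U_\beta(r)}\Phi(u)\,\mathrm{d}E_u = \int_0^r \Phi(U_\beta(r'))\,\mathrm{d}r'$ and $\int_0^{U_\beta(r)}\Psi(u)\,\mathrm{d}B_{E_u} = \int_0^r \Psi(U_\beta(r'))\,\mathrm{d}B_{r'}$ to rewrite $Y$ as a classical It\^o process, apply the ordinary It\^o formula to $(r, U_\beta(r), Y(r)) \mapsto F(U_\beta(r), r, Y(r))$ on the subordinator scale, and finally time--change back by evaluating at $r = E_t$, handling the constancy intervals of $E$ by continuity of both sides.
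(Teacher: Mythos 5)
The paper does not actually prove Lemma~\ref{timechangeditoformula}; it is imported from \cite{Qiong2016}, and the proof there (in the spirit of Kobayashi's It\^o formula for time-changed semimartingales) is essentially the argument you give: regard $(s, E_s, X(s))$ as a continuous $\mathcal{G}_t$-semimartingale triple, apply the multidimensional It\^o formula, use associativity of the stochastic integral for the $\mathrm{d}X$ term, and note that $s$ and $E_s$ are continuous finite-variation processes while $\langle B_{E_\cdot}\rangle_t = E_t$, so all covariation terms vanish except $\tfrac{1}{2}\int_0^t F_{xx}(s,E_s,X(s))\Psi^2(s)\,\mathrm{d}E_s$. Your proposal is correct and takes this same route; the only point worth stating explicitly is that the classical formula is usually quoted for $C^{2}$ (or $C^{1,2}$) functions, and the weaker $C^{1,1,2}$ regularity suffices here precisely because the first two arguments are driven by continuous finite-variation processes, a standard refinement.
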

Let $C^{1, 1, 2}({\bf R_{+}}\times {\bf R_{+}}\times {\bf R}; {\bf R_{+}})$ denote the family of all nonnegative functions which is a subspace of $C^{1, 1, 2}({\bf R_{+}}\times {\bf R_{+}}\times {\bf R}; {\bf R})$. If $V(t_{1}, t_{2}, x)\in C^{1, 1, 2}({\bf R_{+}}\times {\bf R_{+}}\times {\bf R}; {\bf R_{+}})$, define two operators $L_1V$ and $L_2V$ from ${\bf R_{+}}\times {\bf R_{+}}\times {\bf R}\to {\bf R_{+}}$ by 
\begin{eqnarray*}
	L_1V(t_1, t_2, x) = V_{t_{1}}(t_1, t_2, x) + V_x(t_1, t_2, x)\rho(t_1, t_2, x)
\end{eqnarray*}
and 
\begin{eqnarray*}
	L_2V(t_1, t_2, x) = V_{t_2}(t_1, t_2, x) + V_x(t_1, t_2, x)f(t_1, t_2, x) + \frac{1}{2}V_{xx}g^{2}(t_1, t_2, x),
\end{eqnarray*}
where
\begin{eqnarray*}
	\begin{aligned}
		V_{t_{1}}(t_1, t_2, x) &= \frac{\partial V(t_1, t_2, x)}{\partial t_1},~~~~~~~	V_{t_{2}}(t_1, t_2, x) = \frac{\partial V(t_1, t_2, x)}{\partial t_2},\\
		V_x(t_1, t_2, x) &= \frac{\partial V(t_1, t_2, x)}{\partial x},~~~~~~ V_{xx}(t_1, t_2, x) = \frac{\partial^{2}V(t_1, t_2, x)}{\partial x^{2}}.
	\end{aligned}
\end{eqnarray*}

\begin{defn}(\cite{mao2007stochastic}) 
Assume that 
	\begin{eqnarray*}
		\rho(t_1, t_2, 0) = f(t_1, t_2,  0) = g(t_1, t_2, 0) = 0 ~~~for ~~all ~~t_{1}, t_{2} \geq 0.
	\end{eqnarray*} 
Then the time-changed SDE~\eqref{genearltimechangedsde}  has the solution $X(t, x_{0}) = 0$ corresponding to the initial value $x_0 = 0$. This solution is called the trivial solution or equilibrium position.
\end{defn}

\begin{defn}\label{definitionsofstability}
	(\cite{mao2007stochastic,mao1991stability} Definition 2.1)The trivial solution of the time-changed SDE~\eqref{genearltimechangedsde} is said to be 
	\begin{itemize}
		\item[1)] stochastically stable or stable in probability if for every pair of $\epsilon\in (0, 1)$ and $r >0$, there exists a $\delta = \delta(\epsilon, r) > 0 $ such that 
			\begin{eqnarray*}
				\mathbb{P}\bigg\{|X(t, x_0)| < r~~\textrm{for all}~~t\geq 0\bigg\}\geq 1 - \epsilon
			\end{eqnarray*}
			whenever $|x_0| < \delta$;
		\item[2)] stochastically asymptotically stable if it is stochastically stable and, moreover, for every $\epsilon\in (0, 1)$, there exists a $\delta_{0} = \delta_{0}(\epsilon) > 0$ such that 
		\begin{eqnarray*}
			\mathbb{P}\bigg\{\lim_{t\to \infty} X(t, x_0) = 0 \bigg\}\geq 1 - \epsilon
		\end{eqnarray*}
		whenever $|x_0| < \delta_{0}$;
	   \item[3)] globally stochastically asymptotically stable if it is stochastically stable and, moreover, for all $x_0\in R$  
	   \begin{eqnarray*}
	   	\mathbb{P}\bigg\{\lim_{t\to \infty} X(t, x_0) = 0 \bigg\} = 1.
	   \end{eqnarray*}
	\end{itemize}
\end{defn}

\begin{remark}
Throughout this paper, without loss of generality, consider  the initial value $x_0\in \bf R$ to be a deterministic constant. The initial value being a random variable seems more general but in effect it is equivalent to having a deterministic constant initial value. For more details on this see~ \cite{mao2007stochastic}.
\end{remark}

Let $\mathcal{K}$ denote the family of all nondecreasing functions $\mu: {\bf R_+}\to{\bf R_+}$ such that $\mu(r) > 0$ for all $r > 0$. Also let 
\begin{eqnarray*}
	{\bf S}_h = \{x\in {\bf R}: |x| < h\} ~~ \forall~0< h \leq \infty.
\end{eqnarray*}

\begin{thm}\label{thm1}
	If there exists a function $V(t_1, t_2, x)\in C^{1, 1, 2}({\bf R_{+}}\times {\bf R_{+}}\times {\bf S}_h ; {\bf R_+})$ and $\mu \in \mathcal{K}$ such that the following conditions are satisfied:
	\begin{enumerate}
		\item[1)] $V(t_1, t_2, 0) = 0$,
		\item[2)] $\mu(|x|) \leq V(t_1, t_2, x)$ for all $(t_1, t_2, x) \in {\bf R_+} \times {\bf R_+} \times {\bf S}_h$,
		\item[3)] $L_1V(t_1, t_2, x) \leq 0$,
		\item[4)] $L_2V(t_1, t_2, x) \leq 0$,
	\end{enumerate}
	then the trivial solution of the time-changed SDE~\eqref{genearltimechangedsde} is stochastically stable or stable in probability.
\end{thm}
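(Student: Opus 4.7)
The plan is to follow the classical Lyapunov-function strategy of Has'minskii/Mao, adapted to the time-changed setting by invoking the generalized It\^o formula in Lemma~\ref{timechangeditoformula}. Fix $\epsilon\in(0,1)$ and $r\in(0,h)$. First I would use continuity of $V$ at the origin together with condition 1) to select $\delta=\delta(\epsilon,r)>0$ so that $|x_0|<\delta$ forces $V(0,0,x_0)<\epsilon\mu(r)$. Then define the first exit time
\begin{equation*}
\tau_r:=\inf\{t\geq 0:|X(t,x_0)|\geq r\},
\end{equation*}
noting that because $E_t$ is continuous (the stable subordinator $U_\beta$ is strictly increasing) and $B$ is continuous, the solution $X$ has continuous paths, so $|X(\tau_r)|=r$ on $\{\tau_r<\infty\}$ and the trajectory stays in $\mathbf{S}_h$ on $[0,\tau_r)$.

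Next I would apply Lemma~\ref{timechangeditoformula} to $V(t,E_t,X(t))$ stopped at $t\wedge\tau_r$. Grouping the $ds$ and $dE_s$ terms by the operators $L_1V$ and $L_2V$ yields
\begin{equation*}
V(t\wedge\tau_r,E_{t\wedge\tau_r},X(t\wedge\tau_r))=V(0,0,x_0)+\int_0^{t\wedge\tau_r}L_1V\,ds+\int_0^{t\wedge\tau_r}L_2V\,dE_s+\int_0^{t\wedge\tau_r}V_x(s,E_s,X(s))g(s,E_s,X(s))\,dB_{E_s}.
\end{equation*}
Because all arguments lie in $\mathbf{R}_+\times\mathbf{R}_+\times\mathbf{S}_h$ before the exit time, conditions 3) and 4) apply and make the two drift integrals nonpositive. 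The stochastic integral is driven by the $\mathcal{G}_t$-martingale $B_{E_t}$; on $[0,\tau_r]$ the integrand is bounded (by continuity in $x$ on the compact $\overline{\mathbf{S}_r}$) and $\mathbb{E}[E_{t\wedge\tau_r}]\leq\mathbb{E}[E_t]=t^{\beta}/\Gamma(\beta+1)<\infty$, so the stopped integral is a true martingale with mean zero. Taking expectations therefore gives
\begin{equation*}
\mathbb{E}\bigl[V(t\wedge\tau_r,E_{t\wedge\tau_r},X(t\wedge\tau_r))\bigr]\leq V(0,0,x_0).
\end{equation*}

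Finally I would use condition 2) to bound the left-hand side from below on the event $\{\tau_r\leq t\}$. Since $|X(\tau_r)|=r$ there, $V(\tau_r,E_{\tau_r},X(\tau_r))\geq\mu(r)$, which gives
\begin{equation*}
\mu(r)\,\mathbb{P}(\tau_r\leq t)\leq\mathbb{E}\bigl[\mathbf{1}_{\{\tau_r\leq t\}}V(\tau_r,E_{\tau_r},X(\tau_r))\bigr]\leq V(0,0,x_0)<\epsilon\mu(r).
\end{equation*}
Dividing by $\mu(r)>0$ and sending $t\to\infty$ via monotone convergence produces $\mathbb{P}(\tau_r<\infty)\leq\epsilon$, which is precisely $\mathbb{P}\{|X(t,x_0)|<r\text{ for all }t\geq 0\}\geq 1-\epsilon$, establishing stochastic stability.

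The only genuinely delicate point, and the one I would spend the most care on, is justifying that $\int_0^{\cdot\wedge\tau_r}V_x g\,dB_{E_s}$ is a mean-zero martingale in this time-changed setting rather than merely a local martingale; this rests on the boundedness of the integrand on $\mathbf{S}_r$ combined with the moment estimate for $E_t$ recalled in the introduction. Everything else is the standard Lyapunov bookkeeping, with conditions 3) and 4) playing precisely the roles of the drift and diffusion dissipativity that the single operator $\mathcal{L}V$ plays in the classical It\^o case — they arise naturally here because the time-changed It\^o formula splits into separate $ds$ and $dE_s$ integrals.
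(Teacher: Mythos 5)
Your overall strategy is exactly the paper's: choose $\delta$ from the continuity of $V$ at $0$ and condition 1), stop the solution at the first exit time of ${\bf S}_r$, apply the time-changed It\^o formula of Lemma~\ref{timechangeditoformula}, kill the $\mathrm{d}s$ and $\mathrm{d}E_s$ integrals with conditions 3) and 4), and convert the resulting expectation bound into $\mathbb{P}\{\tau_r<\infty\}\leq\epsilon$ via condition 2) and the monotonicity of $\mu$. The one place where your write-up has a genuine gap is the step you yourself flag: the claim that $\int_0^{\cdot\wedge\tau_r}V_x\,g\,\mathrm{d}B_{E_s}$ is a true mean-zero martingale because ``the integrand is bounded by continuity in $x$ on the compact $\overline{{\bf S}_r}$.'' The integrand is $V_x(s,E_s,X(s))\,g(s,E_s,X(s))$, and compactness only controls the third argument: the second argument $E_s$ is unbounded across $\omega$ (and in $t$), and the hypotheses on $V$ give no uniform control of $V_x(t_1,t_2,x)$ in $t_1,t_2$. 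So the integrand is at best pathwise bounded by an $\omega$-dependent constant, and neither that nor $\mathbb{E}[E_{t\wedge\tau_r}]<\infty$ yields the square-integrability (e.g.\ $\mathbb{E}\int_0^{t\wedge\tau_r}(V_xg)^2\,\mathrm{d}E_s<\infty$) needed to conclude the stopped integral has zero expectation; in general it is only a local martingale.

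The paper closes exactly this hole differently: it introduces the additional localizing times $U_k=k\wedge\inf\{t\geq0:|\int_0^{T_r\wedge t}V_x\,g\,\mathrm{d}B_{E_s}|\geq k\}$, so that the stochastic integral stopped at $T_r\wedge U_k$ is bounded (hence a true martingale with mean zero), takes expectations there, and then removes the localization by letting $k\to\infty$ with Fatou's lemma, which only uses the nonnegativity of $V$. Your argument becomes correct if you either insert this localization (or any standard localizing sequence for the local martingale) before taking expectations and then pass to the limit by Fatou, or alternatively add an explicit hypothesis or verification that $V_x\,g$ evaluated along $(s,E_s,X(s))$ is suitably integrable on the stopped interval. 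The remainder of your proof, including the final passage $t\to\infty$ by monotone convergence, matches the paper's reasoning and needs no change.
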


\begin{proof}
	Let $\varepsilon \in (0, 1) $ and $0 < r < h$ be arbitrary. Since $V(t_1, t_2, x)$ is continuous on $\bf R_{+}\times\bf R_{+}\times\bf S_{h}$ and $V(t_1, t_2, 0) = 0$, there exists a $\delta = \delta(\varepsilon, r) > 0$ such that 
	\begin{eqnarray}\label{supconditioninstochasticstability}
		\frac{1}{\varepsilon}\sup_{x_0\in {\bf S}_\delta} V(0, 0, x_0) \leq \mu(r).
	\end{eqnarray}
From~\eqref{supconditioninstochasticstability} and the condition~$2)$
\begin{eqnarray*}
	\delta < r.
\end{eqnarray*} 
Let $x_0\in {\bf S}_h$ and define the stopping time
\begin{eqnarray}\label{stoppingtimeonr}
	T_r = \inf \{t\geq 0: |X(t, x_0)| \geq r\}
\end{eqnarray}
and
\begin{eqnarray*}
	U_k = k\wedge \inf\bigg\{t\geq 0: \bigg|\int_{0}^{T_r\wedge t}V_x(s, E_s, X(s))g(s, E_s, X(s))\mathrm{d}B_{E_s}\bigg|\geq k \bigg\}
\end{eqnarray*}
for $k = 1, 2, \cdots$. It is obvious that $U_k\to\infty$ as $k\to\infty$. Applying the time-changed It\^{o} formula, Lemma~\ref{timechangeditoformula}, to SDE~\eqref{genearltimechangedsde} yields
\begin{eqnarray}\label{usingItoformulaofV}
	\begin{aligned}
	V(T_r\wedge U_k, E_{T_r\wedge U_k}, &X(T_r\wedge U_k)) = V(0, 0, x_0) + \int_{0}^{T_r\wedge U_k}L_1V(s, E_s, X(s))\mathrm{d}s\\
	 &+ \int_{0}^{T_r\wedge U_k}L_2V(s, E_s, X(s))\mathrm{d}E_s
	+ \int_{0}^{T_r\wedge U_k}V_x(s, E_s, X(s))g(s, E_s, X(s))\mathrm{d}B_{E_s}.
	\end{aligned}
\end{eqnarray}
From \cite{Marcin2010} and \cite{kuo05}, 
\begin{eqnarray*}
	\int_{0}^{t}V_x(s, E_s, X(s))g(s, E_{s}, X(s))\mathrm{d}B_{E_{s}}
\end{eqnarray*}
is a mean 0, square integrable martingale with respect the filtration $\mathcal{G}_{t} = \mathcal{F}_{E_{t}}$. Therefore, taking expectations on both sides of~\eqref{usingItoformulaofV}, and considering conditions $3)$ and $4)$ yield
\begin{eqnarray}\label{expectationofVonItoformulainstochasticstability}
	\mathbb{E}V(T_r\wedge U_k, E_{T_r\wedge U_k}, X(T_r\wedge U_k)) \leq V(0, 0, x_0).
\end{eqnarray}
Let $k\to \infty$ and then apply Fatou's lemma on the left hand side of~\eqref{expectationofVonItoformulainstochasticstability} to obtain 
\begin{eqnarray*}
	\mathbb{E}V(T_r, E_{T_r}, X(T_r)) \leq V(0, 0, x_0).
\end{eqnarray*}
Since $V(t_{1}, t_{2}, x)$ is a nonnegative function, 
\begin{eqnarray}\label{inequalitywithfinitestoppingtimeinstochasticstability}
	\mathbb{E}\bigg\{V(T_r, E_{T_r}, X(T_r))\mathbf{1}_{\{T_r<\infty\}}\bigg\} \leq \mathbb{E}V(T_r, E_{T_r}, X(T_r)) \leq V(0, 0, x_0).
\end{eqnarray}
On the other hand, apply condition $2)$ to yield
\begin{eqnarray}\label{inequalitywithcondition2instochasticstability}
	\mathbb{E}\bigg\{V(T_r, E_{T_r}, X(T_r))\mathbf{1}_{\{T_r<\infty\}}\bigg\} \geq \mathbb{E}\bigg\{\mu(|X(T_{r})| )\mathbf{1}_{\{T_{r} < \infty\}}\bigg\}.
\end{eqnarray}
Also since the function $\mu$ is nondecreasing, applying~\eqref{stoppingtimeonr} and ~\eqref{inequalitywithcondition2instochasticstability} yields 
\begin{eqnarray*}
	|X(T_{r})| \geq r
\end{eqnarray*} and
\begin{eqnarray}\label{inequalitywithcondition2andnondecreasingpropertyinstochasticstability}
\mathbb{E}\bigg\{V(T_r, E_{T_r}, X(T_r))\mathbf{1}_{\{T_r<\infty\}}\bigg\}\geq \mu(r)\mathbb{P}\bigg\{T_r < \infty\bigg\}.
\end{eqnarray}
Then, combining~\eqref{supconditioninstochasticstability},~\eqref{inequalitywithfinitestoppingtimeinstochasticstability} and~\eqref{inequalitywithcondition2andnondecreasingpropertyinstochasticstability} yields
\begin{eqnarray*}
	\mathbb{P}\bigg\{T_r <\infty\bigg\} \leq \varepsilon,
\end{eqnarray*}
which implies
\begin{eqnarray*}
	\mathbb{P}\bigg\{T_r = \infty\bigg\} \geq 1 - \varepsilon.
\end{eqnarray*}
Therefore,
\begin{eqnarray*}
	\mathbb{P}\bigg\{|X(t, x_0)| < r~~\textrm{for all}~~t \geq 0\bigg\} \geq 1 - \varepsilon,
\end{eqnarray*}
which verifies the definition of $X(t, x_{0})$ being stochastically stable and therefore completes the proof of Theorem~\ref{thm1}.
\end{proof}
The next theorem concerns stochastically asymptotically stable as stated in $2)$ of Definition~\ref{definitionsofstability}. Before proposing the next theorem, another helpful notation is needed.
\begin{eqnarray*}
	\underline{{\bf S}_{h}} =  \{x\in {\bf R}: |x| \leq h\} ~~ \forall~0< h \leq \infty.
\end{eqnarray*}

\begin{thm}\label{thm2}
	If there exists a function $V(t_1, t_2, x)\in C^{1, 1, 2}({\bf R_{+}}\times {\bf R_{+}}\times {\bf S}_h; {\bf R_+})$ and $\mu \in \mathcal{K}$ such that 
	\begin{enumerate}
		\item $V(t_1, t_2, 0) = 0$,
		\item $\mu(|x|) \leq V(t_1, t_2, x)$ for all $(t_1, t_2, x) \in {\bf R_+} \times {\bf R_+} \times \bf R$,
		\item for any $\alpha \in (0, h), x\in {\bf S}_h - \underline{{\bf S}_{\alpha}}$, 
	\end{enumerate}
	   assume	
       \begin{eqnarray*}
			L_1V(t_1, t_2, x) \leq -\gamma_{1}(\alpha)~~~a.s.
		\end{eqnarray*}
		and
		\begin{eqnarray*}
			L_2V(t_1, t_2, x) \leq -\gamma_{2}(\alpha)~~~a.s., 
		\end{eqnarray*}
	where $\gamma_1(\alpha) \geq 0$ and $\gamma_{2}(\alpha) \geq 0$ but not equal to zero at the same time. Then the trivial solution of the time-changed SDE~\eqref{genearltimechangedsde} is stochastically asymptotically stable.
\end{thm}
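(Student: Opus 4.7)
The plan is to split the statement into the two parts of Definition~3.2~2): first derive stochastic stability from Theorem~3.1, and then upgrade to the almost-sure convergence $X(t,x_0)\to 0$ via a supermartingale/Lyapunov argument built on Lemma~3.1. The overall scheme will mirror the proof of Theorem~3.1 closely; only at the place where the \emph{strict} negativity of $L_1V$ and $L_2V$ outside $\underline{{\bf S}_\alpha}$ is used will the two arguments diverge.

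Stochastic stability is essentially immediate. For any $x\in {\bf S}_h$ with $x\neq 0$, picking $\alpha\in(0,|x|)$ puts $x\in {\bf S}_h\setminus\underline{{\bf S}_\alpha}$, so condition 3) yields $L_iV(t_1,t_2,x)\leq -\gamma_i(\alpha)\leq 0$, and continuity of $L_iV$ extends this bound to $x=0$. Together with conditions 1)--2), this matches the hypotheses of Theorem~3.1, which supplies $\delta_1=\delta_1(\varepsilon,h)>0$ such that $|x_0|<\delta_1$ implies $\mathbb{P}\{T_h=\infty\}\geq 1-\varepsilon/2$, where $T_h:=\inf\{t\geq 0:|X(t,x_0)|\geq h\}$.

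For the asymptotic part, I would apply the time-changed It\^o formula of Lemma~3.1 to $V(t,E_t,X(t))$ and stop at $T_h\wedge U_k$ with the same localizing sequence $U_k$ as in the proof of Theorem~3.1, so as to turn the $\mathrm{d}B_{E_s}$-integral into a genuine mean-zero martingale. Conditions 3)--4) make both drift integrals non-positive, hence the stopped process is a nonnegative supermartingale; Fatou and Doob's convergence theorem then furnish an a.s.\ finite limit $V_\infty\geq 0$. Taking expectations in the It\^o identity and then letting $k\to\infty$ and $t\to\infty$ gives, for each fixed $\alpha\in(0,h)$,
\[
\mathbb{E}\!\int_0^{T_h}\!\gamma_1(\alpha)\mathbf{1}_{\{|X(s)|>\alpha\}}\,\mathrm{d}s
+\mathbb{E}\!\int_0^{T_h}\!\gamma_2(\alpha)\mathbf{1}_{\{|X(s)|>\alpha\}}\,\mathrm{d}E_s
\leq V(0,0,x_0),
\]
so that on $\{T_h=\infty\}$ the Lebesgue measure of $\{s:|X(s)|>\alpha\}$ (when $\gamma_1(\alpha)>0$), or its $\mathrm{d}E_s$-measure (when $\gamma_2(\alpha)>0$), is a.s.\ finite.

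The main obstacle, as I see it, is converting this integrability statement into the pointwise conclusion $X(t)\to 0$ a.s.\ on $\{T_h=\infty\}$. Because $E_t$ is continuous and $B_{E_t}$ inherits continuity from $B$, while the drift terms are of bounded variation in $t$, the solution $X$ has continuous sample paths, and I would exploit this by contradiction: if $\limsup_{t\to\infty}|X(t)|>2\alpha$ on a positive-probability subset of $\{T_h=\infty\}$ for some rational $\alpha>0$, then continuity forces infinitely many excursions of $|X|$ between levels $\alpha$ and $2\alpha$, and a quantitative modulus-of-continuity bound for $X$ (viewed as driven by $\mathrm{d}s$, $\mathrm{d}E_s$ and $\mathrm{d}B_{E_s}$) produces a uniform positive lower bound on the $\mathrm{d}s$- or $\mathrm{d}E_s$-duration of each such excursion, contradicting the finiteness just derived. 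Ranging $\alpha$ over a rational sequence tending to $0$ yields $X(t,x_0)\to 0$ a.s.\ on $\{T_h=\infty\}$, and shrinking $\delta_1$ to a suitable $\delta_0\in(0,\delta_1)$ absorbs the residual $\varepsilon/2$ tail, completing the verification of stochastic asymptotic stability.
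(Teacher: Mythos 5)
Your first half (stochastic stability via Theorem~\ref{thm1}) and your derivation of the bound
$\gamma_1(\alpha)\,\mathbb{E}\!\int_0^{T_h}\mathbf{1}_{\{|X(s)|>\alpha\}}\mathrm{d}s+\gamma_2(\alpha)\,\mathbb{E}\!\int_0^{T_h}\mathbf{1}_{\{|X(s)|>\alpha\}}\mathrm{d}E_s\le V(0,0,x_0)$
are fine, but the step you yourself flag as ``the main obstacle'' is a genuine gap, and the excursion argument you sketch to close it does not work. There is no deterministic modulus-of-continuity bound giving ``a uniform positive lower bound on the $\mathrm{d}s$- or $\mathrm{d}E_s$-duration of each excursion'': the martingale part $\int g\,\mathrm{d}B_{E_s}$ can carry $X$ across the gap from $\alpha$ to $2\alpha$ in arbitrarily small quadratic-variation time $\Delta E$ (and arbitrarily small real time), each with small but positive probability, so over infinitely many excursions you would need a Borel--Cantelli/maximal-inequality argument that your sketch does not supply. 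Worse, the hypotheses allow $\gamma_1(\alpha)=0$, $\gamma_2(\alpha)>0$, in which case your integrability statement controls only the $\mathrm{d}E_s$-measure of $\{|X|>\alpha\}$; but $E_t$ has flat periods of positive length (the jump intervals of $U_\beta$), during which $X$ evolves purely by the $\rho\,\mathrm{d}t$ drift and accumulates no $\mathrm{d}E_s$-measure at all, so excursions occurring inside flat periods are invisible to your bound. Ruling them out requires using $L_1V\le 0$ (monotonicity of $V$ along the $\mathrm{d}t$-flow), which your argument never invokes at this stage.

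The paper closes exactly this gap by the classical Khasminskii--Mao two-ball scheme rather than by excursion counting: the integral bound is used only to conclude $\mathbb{E}(T_h\wedge T_\alpha)<\infty$ or $\mathbb{E}E_{T_h\wedge T_\alpha}<\infty$, hence (since $E_t\to\infty$ a.s.) $\mathbb{P}\{T_h\wedge T_\alpha<\infty\}=1$, i.e.\ the process enters the small ball $\underline{{\bf S}_\alpha}$ in finite time with probability close to one. After that entry time $\pi$, one applies the time-changed It\^o formula again and uses the supermartingale-type inequality for $V(t,E_t,X(t))$ together with $B_\alpha=\sup_{\underline{{\bf S}_\alpha}}V\to 0$ as $\alpha\to 0$ and $V\ge\mu(\beta)$ on $\{|x|\ge\beta\}$ to get $\mathbb{P}\{T_\beta<\infty,\,T_h=\infty\}\le B_\alpha/\mu(\beta)$, so with probability at least $1-\varepsilon$ the path never again leaves ${\bf S}_\beta$, giving $\limsup_t|X(t,x_0)|\le\beta$ and then $\beta\to 0$. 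If you want to salvage your route, you should replace the modulus-of-continuity claim by this Lyapunov exit-probability estimate (or an equivalent martingale inequality for $V$ after the first entry into $\underline{{\bf S}_\alpha}$); as written, the pointwise convergence does not follow from the integral finiteness alone.
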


\begin{proof}
From Theorem~\ref{thm1}, it is known that the trivial solution of the time-changed SDE~\eqref{genearltimechangedsde} is stochastically stable. This means for any fixed $\varepsilon \in (0, 1)$, there is a $\delta = \delta(\varepsilon) > 0$ such that 
\begin{eqnarray}\label{meaningofstochasticstabilityinasymptoticallystable}
	\mathbb{P}\bigg\{|X(t, x_0)| < h~~\textrm{for all}~~t\geq 0\bigg\} \geq 1 - \frac{\varepsilon}{5} 
\end{eqnarray}
whenever $x_0\in {\bf S}_{\delta}$. Then fix $x_0\in {\bf S}_{\delta}$ and let $0 < \alpha < \beta < |x_0|$ be arbitrary. Accordingly, define the following stopping times
\begin{eqnarray}\label{stoppingtimeofh}
	T_h = \inf \{t\geq 0: |X(t, x_0)|\geq h\}, 
\end{eqnarray}
\begin{eqnarray*}\label{talpha}
    T_{\alpha} = \inf \{t\geq 0: |X(t, x_0)|\leq \alpha\},
\end{eqnarray*}
and
\begin{eqnarray}\label{sup1}
	U_k = k\wedge \inf\bigg\{t\geq 0: \bigg|\int_{0}^{T_h\wedge T_{\alpha}\wedge t}V_x(s, E_s, X(s))g(s, E_s, X(s))\mathrm{d}B_{E_s}\bigg|\geq k \bigg\}
\end{eqnarray}
for all $k = 1, 2, \cdots$. Obviously, $U_k\to\infty$ as $k\to\infty$ a.s.. Now, applying the time-changed It\^{o} formula, Lemma~\ref{timechangeditoformula}, to SDE~\eqref{genearltimechangedsde}yields
\begin{eqnarray*}
	\begin{aligned}
	&\quad\mathbb{E}V(T_h\wedge T_{\alpha}\wedge U_k, E_{T_h\wedge T_{\alpha}\wedge U_k}, X(T_h\wedge T_{\alpha}\wedge U_k))\\ 
	&= V(0, 0, x_0) + \mathbb{E}\int_{0}^{T_h\wedge T_{\alpha}\wedge U_k}L_1V(s, E_s, X(s))\mathrm{d}s
	+ \mathbb{E}\int_{0}^{T_h\wedge T_{\alpha}\wedge U_k}L_2V(s, E_s, X(s))\mathrm{d}E_s.
	\end{aligned}
\end{eqnarray*}
According to condition $3)$,
\begin{eqnarray*}
	\begin{aligned}
0 &\leq \mathbb{E}V(T_h\wedge T_{\alpha}\wedge U_k, E_{T_h\wedge T_{\alpha}\wedge U_k}, X(T_h\wedge T_{\alpha}\wedge U_k)) \\
&\leq V(0, 0, x_0) - \gamma_1(\alpha)\mathbb{E}(T_h\wedge T_{\alpha}\wedge U_k) - \gamma_2(\alpha)\mathbb{E}(E_{T_h\wedge T_{\alpha}\wedge U_k}).
   \end{aligned}
\end{eqnarray*}
Therefore,
\begin{eqnarray}\label{probabilityinductioninequality}
	\gamma_1(\alpha)\mathbb{E}(T_h\wedge T_{\alpha}\wedge U_k) + \gamma_2(\alpha)\mathbb{E}E_{T_h\wedge T_{\alpha}\wedge U_k}\leq V(0, 0, x_0).
\end{eqnarray}
If $k\to\infty$ on the left hand side of~\eqref{probabilityinductioninequality}, then~\eqref{probabilityinductioninequality} becomes
\begin{eqnarray*}\label{probabilityinductioninequalityreduced}
\gamma_1(\alpha)\mathbb{E}(T_h\wedge T_{\alpha}) + \gamma_2(\alpha)\mathbb{E}E_{T_h\wedge T_{\alpha}}\leq V(0, 0, x_0).
\end{eqnarray*}
Also, since $E_t\to\infty$ as $t\to\infty$, and $\gamma_{1}(\alpha)$ and $\gamma_{2}(\alpha)$ are not equal to zero at the same time, 
\begin{eqnarray}\label{probabilityofunionofstoppingtimesThandTalphafinite}
	\mathbb{P}\bigg\{T_h\wedge T_{\alpha} <\infty \bigg\} = 1.
\end{eqnarray}
From~\eqref{meaningofstochasticstabilityinasymptoticallystable} and~\eqref{stoppingtimeofh}, 
\begin{eqnarray}\label{probabilityofstoppingtimeThfinite}
   \mathbb{P}\bigg\{T_h <\infty\bigg\} \leq \frac{\varepsilon}{5}.
\end{eqnarray}
Combining~\eqref{probabilityofunionofstoppingtimesThandTalphafinite} and~\eqref{probabilityofstoppingtimeThfinite} yields
\begin{eqnarray*}
	\begin{aligned}
	1 = \mathbb{P}\bigg\{T_h\wedge T_{\alpha} < \infty \bigg\} &\leq \mathbb{P}\bigg\{T_h < \infty \bigg\} + \mathbb{P}\bigg\{T_{\alpha} < \infty \bigg\} \leq \mathbb{P}\bigg\{T_{\alpha} < \infty \bigg\} + \frac{\varepsilon}{5}.
	\end{aligned}
\end{eqnarray*}
Consequently,
\begin{eqnarray*}
	\mathbb{P}\bigg\{T_{\alpha} < \infty\bigg\} \geq 1 - \frac{\varepsilon}{5},
\end{eqnarray*}
which implies that there exists a positive constant $\theta = \theta(\alpha)$ such that
\begin{eqnarray*}
	\mathbb{P}\bigg\{T_{\alpha} < \theta \bigg\} \geq 1 - \frac{2\varepsilon}{5}.
\end{eqnarray*}
On the other hand, 
\begin{eqnarray}\label{probabilityofstoppingtimesTalphalessthanThminimumtheta}
	\begin{aligned}
	     \mathbb{P}\bigg\{T_{\alpha} < T_h\wedge\theta\bigg\} &\geq \mathbb{P}\bigg\{\{T_{\alpha} < \theta\}\cap\{T_{h}= \infty \}\bigg\}\\ 
	     &\geq \mathbb{P}\bigg\{T_{\alpha} < \theta \bigg\} - \mathbb{P}\bigg\{T_{h} < \infty \bigg\} \geq 1 - \frac{3\varepsilon}{5}.
	\end{aligned}
\end{eqnarray}
Moreover, define stopping times
\begin{eqnarray}\label{definitionofstoppingtimepi}
		\pi = \left\{ \begin{array}{rl}
		&T_{\alpha}, ~~~~\textrm{if}~~~ T_{\alpha} < T_h\wedge \theta \\
		& \infty, ~~~~\textrm{otherwise}
		\end{array} \right. ,
\end{eqnarray}
\begin{eqnarray}\label{definitionofstoppingtimeTbeta}
	T_{\beta} = \inf\{t > \pi: |X(t, x_0)| \geq \beta \},
\end{eqnarray}
and
\begin{eqnarray}\label{definitionofstoppingtimeTi}
	T_{i} = \inf\bigg\{t\geq \pi: \bigg|\int_{\pi}^{T_{\beta}\wedge t}V_{x}(s, E_s, X(s))g(s, E_s, X(s))\mathrm{d}B_{E_s}\bigg| \geq i\bigg\}
\end{eqnarray}
for $i = 1, 2, \cdots.$ Similarly, $T_{i}\to\infty$ as $i\to\infty$. Again applying the time-changed It\^o formula and condition $3)$ yield
\begin{eqnarray*}
	\mathbb{E}V(\pi\wedge T_h\wedge t, E_{\pi\wedge T_h\wedge t}, X(\pi\wedge T_h\wedge t)) \geq \mathbb{E}V(T_{\beta}\wedge T_{i}\wedge T_h\wedge t, E_{T_{\beta}\wedge T_i\wedge T_h\wedge t}, X(T_{\beta}\wedge T_i\wedge T_h\wedge t))
\end{eqnarray*}
for all $i = 1, 2, \cdots$ and $t\geq 0$. Let $i\to\infty$ and then $t\to\infty$,
\begin{eqnarray}\label{auxiliaryindqualityofstoppingtimes}
		\mathbb{E}V(\pi\wedge T_h, E_{\pi\wedge T_h}, X(\pi\wedge T_h)) \geq \mathbb{E}V(T_{\beta}\wedge T_h,  E_{T_{\beta}\wedge T_h}, X(T_{\beta}\wedge T_h)).
\end{eqnarray}
Also, from~\eqref{definitionofstoppingtimepi},~\eqref{definitionofstoppingtimeTbeta} and~\eqref{auxiliaryindqualityofstoppingtimes},
\begin{eqnarray*}
			\mathbb{E}\bigg\{V(\pi\wedge T_h, E_{\pi\wedge T_h}, X(\pi\wedge T_h))\mathbf{1}_{\{\pi < \infty\}}\bigg\}\geq \mathbb{E}\bigg\{V(T_{\beta}\wedge T_h,  E_{T_{\beta}\wedge T_h}, X(T_{\beta}\wedge T_h))\mathbf{1}_{\{T_{\beta < \infty\}}}\bigg\},
\end{eqnarray*}
which indicates from~\eqref{probabilityofstoppingtimesTalphalessthanThminimumtheta} that
\begin{eqnarray}\label{inequalitybetweenVofTalphaandTbeta} 
	\mathbb{E}\bigg\{V(T_{\alpha}, E_{T_{\alpha}}, X(T_{\alpha}))\mathbf{1}_{\{T_{\alpha} < T_h\wedge\theta\}}\bigg\} \geq \mathbb{E}\bigg\{V(T_{\beta}, E_{T_{\beta}}, X(T_{\beta}))\mathbf{1}_{\big\{\{T_{\beta} < \infty\}\cap\{T_{h} = \infty\}\big\}}\bigg\}.
\end{eqnarray}
Furthermore, define
\begin{eqnarray}\label{definitionofsupofV}
	B_{\alpha} = \sup\bigg\{V(t_{1}, t_{2}, x): (t_{1}, t_{2}, x) \in {\bf R_{+}}\times {\bf R_{+}}\times\underline{{\bf S}_{\alpha}}\bigg\}.
\end{eqnarray}
From condition $1)$,
\begin{eqnarray*}
	\lim_{\alpha \to 0}B_{\alpha} = 0,
\end{eqnarray*}
which means there exists a small $\alpha$ such that 
\begin{eqnarray}\label{relationbetweensupVandmu}
	\frac{B_{\alpha}}{\mu(\beta)} < \frac{\varepsilon}{5}.
\end{eqnarray}
Combining~\eqref{inequalitybetweenVofTalphaandTbeta},~\eqref{definitionofsupofV} and~\eqref{relationbetweensupVandmu} yields
\begin{eqnarray}\label{probabilityupperboundofunionofstoppingtimesTbetanadThinfinity}
	\mathbb{P}\bigg\{\{T_{\beta} < \infty \}\cap\{T_{h} = \infty\}\bigg\} \leq \frac{B_{\alpha}}{\mu(\beta)} < \frac{\varepsilon}{5}.
\end{eqnarray}
Moreover,
\begin{eqnarray}\label{probabilitylowerboundofunionofstoppingtimesTbetanadThinfinity}
	\begin{aligned}
	    \mathbb{P}\bigg\{\{T_{\beta} < \infty \}\cap\{T_{h} = \infty\}\bigg\} &\geq \mathbb{P}\bigg\{T_{\beta} < \infty \bigg\} - \mathbb{P}\bigg\{T_{h} < \infty \bigg\}\\ 
	    &\geq \mathbb{P}\bigg\{T_{\beta} < \infty \bigg\} - \frac{\varepsilon}{5}.
	\end{aligned}
\end{eqnarray}
Taking into account~\eqref{probabilityupperboundofunionofstoppingtimesTbetanadThinfinity} and~\eqref{probabilitylowerboundofunionofstoppingtimesTbetanadThinfinity} yields
\begin{eqnarray}\label{probabilityofstoppingtimeTbetafinite}
	\mathbb{P}\bigg\{T_{\beta} < \infty \bigg\} < \frac{2\varepsilon}{5}.
\end{eqnarray}
Furthermore, combine~\eqref{probabilityofstoppingtimesTalphalessthanThminimumtheta} and~\eqref{probabilityofstoppingtimeTbetafinite} to compute 
\begin{eqnarray*}
	\begin{aligned}
	    \mathbb{P}\bigg\{\pi < \infty~\textrm{and}~T_{\beta} = \infty \bigg\} &\geq \mathbb{P}\bigg\{\pi < \infty \bigg\} - \mathbb{P}\bigg\{T_{\beta} < \infty \bigg\}\\
	    &\geq \mathbb{P}\bigg\{T_{\alpha} < T_{h}\wedge\theta\bigg\} - \frac{2\varepsilon}{5}\\
	    &> 1 -\varepsilon,
	\end{aligned}
\end{eqnarray*}
which implies
\begin{eqnarray*}
	\mathbb{P}\bigg\{\omega: \limsup_{t\to\infty}|X(t, x_0) |\leq \beta\bigg\} > 1 - \varepsilon.
\end{eqnarray*}
Finally, since $\beta$ is arbitrary, let $\beta\to 0$ to yield
\begin{eqnarray*}
	\mathbb{P}\bigg\{\omega: \lim_{t\to\infty}|X(t, x_0) | = 0\bigg\} \geq 1 - \varepsilon.
\end{eqnarray*}
This completes the proof.
\end{proof}

When the initial condition $x_{0}\in\bf R$, the conclusion in Theorem \ref{thm2} still holds. Then, it is possible to prove that the trivial solution of~\eqref{genearltimechangedsde} is globally asymptotically stochastically stable as $h\to\infty$. So, the next theorem concerns globally stochastically asymptotically stable when imposing a condition on the stochastic Lyapunov function $V(t_1, t_2, x)$.

\begin{thm}\label{thm3}
	Assume there exists a function $V(t_1, t_2, x)\in C^{1, 1, 2}({\bf R_{+}}\times {\bf R_{+}}\times {\bf R}; {\bf R_+})$ and $\mu \in \mathcal{K}$ such that 
	\begin{enumerate}
		\item $V(t_1, t_2, 0) = 0$,
		\item $\mu(|x|) \leq V(t_1, t_2, x)$ for all $(t_1, t_2, x) \in {\bf R_+} \times {\bf R_+} \times \bf R$,
		\item for all $(t_1, t_2, x) \in {\bf R_+} \times {\bf R_+} \times \bf R$, 
		\begin{eqnarray*}
			\lim_{|x|\to\infty}\inf_{t_1, t_2\geq 0}V(t_1, t_2, x)	= \infty,
		\end{eqnarray*}
		\item for all $(t_1, t_2, x) \in {\bf R_+} \times {\bf R_+} \times \bf R$,
		\begin{eqnarray*}
			L_1V(t_1, t_2, x) \leq -\gamma_{1}(x)~~~a.s.
		\end{eqnarray*}
		and
		\begin{eqnarray*}
			L_2V(t_1, t_2, x) \leq -\gamma_{2}(x)~~~a.s., 
		\end{eqnarray*}
   \end{enumerate}
  where $\gamma_1(x) \geq 0$ and $\gamma_{2}(x) \geq 0$ but not equal to zero at the same time. Then the trivial solution of the time-changed SDE~\eqref{genearltimechangedsde} is globally stochastically asymptotically  stable.
\end{thm}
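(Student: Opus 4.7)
The plan is to bootstrap from Theorems~\ref{thm1} and~\ref{thm2}: conditions 1, 2, and 4 already fit the mould of Theorem~\ref{thm2} on every bounded ball, and the new radial unboundedness hypothesis 3 is precisely what is needed to push the ball out to all of ${\bf R}$. For stochastic stability, note that since $\gamma_1(x), \gamma_2(x) \geq 0$, condition 4 implies $L_1V \leq 0$ and $L_2V \leq 0$ everywhere, so hypotheses 1--4 of Theorem~\ref{thm1} are met on every ${\bf S}_h$, yielding stochastic stability of the trivial solution.

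Next, I would use condition 3 to establish non-explosion for arbitrary $x_0 \in {\bf R}$. Set $\mathcal{V}(h) := \inf_{|x| \geq h,\, t_1, t_2 \geq 0} V(t_1, t_2, x)$, so condition 3 gives $\mathcal{V}(h) \to \infty$ as $h \to \infty$. For the stopping time $T_h$ of~\eqref{stoppingtimeofh}, I would apply Lemma~\ref{timechangeditoformula} to $V(t, E_t, X(t))$ on $[0, T_h \wedge U_k \wedge t]$ (with $U_k$ as in~\eqref{sup1}), take expectations so that the $\mathrm{d}B_{E_s}$-martingale term vanishes, discard the nonpositive drift terms using condition 4, and send $k \to \infty$ via Fatou. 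Since $X(t)$ has continuous paths, on $\{T_h \leq t\}$ one has $|X(T_h)| = h$, giving
\begin{eqnarray*}
\mathcal{V}(h)\,\mathbb{P}\{T_h \leq t\} \leq \mathbb{E} V(T_h \wedge t, E_{T_h \wedge t}, X(T_h \wedge t)) \leq V(0, 0, x_0).
\end{eqnarray*}
Letting first $t \to \infty$ and then $h \to \infty$ yields $\mathbb{P}\{T_h < \infty\} \leq V(0,0,x_0)/\mathcal{V}(h) \to 0$, so for any $\varepsilon \in (0,1)$ I may pick $h > |x_0|$ with $\mathbb{P}\{T_h < \infty\} < \varepsilon/5$.

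With this $h$ fixed, I would run the argument of Theorem~\ref{thm2} verbatim, using the non-explosion bound of the previous step in place of the stochastic-stability input~\eqref{probabilityofstoppingtimeThfinite} (which in Theorem~\ref{thm2} required $x_0 \in {\bf S}_\delta$); this is exactly what unlocks arbitrary $x_0 \in {\bf R}$. For $0 < \alpha < \beta < h$, define the annulus constants
\begin{eqnarray*}
\tilde\gamma_i(\alpha, h) := \inf_{\alpha \leq |x| \leq h,\, t_1, t_2 \geq 0}\gamma_i(x), \quad i = 1, 2,
\end{eqnarray*}
which are nonnegative and, by the standing assumption that $\gamma_1, \gamma_2$ are not simultaneously zero, not both zero on the compact annulus $\alpha \leq |x| \leq h$. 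Substituting these for the constants $\gamma_i(\alpha)$ in Theorem~\ref{thm2}'s proof produces $\mathbb{P}\{\limsup_{t\to\infty}|X(t, x_0)| \leq \beta\} > 1 - \varepsilon$; sending $\beta \to 0$ and then $\varepsilon \to 0$ gives $\mathbb{P}\{\lim_{t\to\infty} X(t, x_0) = 0\} = 1$.

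The main obstacle is ensuring that at least one of $\tilde\gamma_1(\alpha, h), \tilde\gamma_2(\alpha, h)$ remains strictly positive on each annulus, since the hypothesis only bounds $\gamma_1, \gamma_2$ pointwise; a mild continuity assumption on $\gamma_i$ (of the kind already tacitly needed in Theorem~\ref{thm2}) suffices. A secondary delicate point is to invoke condition 3 rather than condition 2 for non-explosion: the function $\mu \in \mathcal{K}$ need not be radially unbounded, but the uniform infimum $\mathcal{V}(h)$ provided by condition 3 is exactly what forces $\mathbb{P}\{T_h < \infty\} \to 0$ as $h \to \infty$.
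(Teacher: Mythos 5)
Your proposal is correct and follows essentially the same route as the paper: stochastic stability via Theorem~\ref{thm1}, then condition 3 (radial unboundedness) combined with the time-changed It\^o formula and condition 4 to get $\mathbb{P}\{T_h<\infty\}\leq \varepsilon/5$ for $h$ large, and finally a rerun of the proof of Theorem~\ref{thm2} with this boundedness bound replacing the stochastic-stability input. The only difference is that you make explicit the passage from the pointwise bounds $\gamma_i(x)$ to annulus constants and the resulting positivity issue, a point the paper glosses over with ``from the proof of Theorem~\ref{thm2}''.
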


\begin{proof}
	From Theorem \ref{thm1}, the trivial solution of the time-changed SDE~\eqref{genearltimechangedsde} is stochastically stable. Then, to show that the trivial solution of SDE~\eqref{genearltimechangedsde} is globally stochastically asymptotically stable, by $3)$ of Definition~\ref{definitionsofstability}, requires proving that
	\begin{eqnarray*}
		\mathbb{P}\bigg\{\lim_{t\to\infty}X(t, x_0) = 0\bigg\} = 1
	\end{eqnarray*}
	for all $x_0\in\bf R$.
	
	For any $x_0\in\bf R$ and arbitrary $\varepsilon\in (0, 1)$, from condition $3)$, there exists a sufficiently large $h$ such that 
	\begin{eqnarray}\label{thm3-ineq1}
		\inf_{|x|\geq h, t_1, t_2\geq 0}V(t_1, t_2, x) \geq \frac{5V(0, 0, x_0)}{\varepsilon}.
	\end{eqnarray}
	Define the stopping time
	\begin{eqnarray*}
		T_h = \inf\{t\geq 0: |X(t, x_0)| \geq h\}.
	\end{eqnarray*}
	Applying the time-changed It\^{o} formula and condition~$4)$ yields
	\begin{eqnarray}\label{thm3-ineq2}
		\mathbb{E}V(T_h\wedge t, E_{T_h\wedge t}, X(T_h\wedge t)) \leq V(0, 0, x_0).
	\end{eqnarray}
	On the other hand, from~\eqref{thm3-ineq1} 
	\begin{eqnarray}\label{expectationoflowerboundofstoppingtatThminimumt}
		\mathbb{E}V(T_h\wedge t, E_{T_h\wedge t}, X(T_h\wedge t)) \geq \frac{5V(0, 0, x_0)}{\varepsilon}\mathbb{P}\bigg\{T_h < t \bigg\}.
	\end{eqnarray}
	Combining~\eqref{thm3-ineq2} and~\eqref{expectationoflowerboundofstoppingtatThminimumt} yields
	\begin{eqnarray*}
		\mathbb{P}\bigg\{T_h < t \bigg\} \leq \frac{\varepsilon}{5}.
	\end{eqnarray*}
	Then, let  $t\to\infty$ to get
	\begin{eqnarray*}
		\mathbb{P}\bigg\{T_h < \infty \bigg\} \leq \frac{\varepsilon}{5},
	\end{eqnarray*}
	which implies
	\begin{eqnarray}\label{probabilityofboundedprocess}
		\mathbb{P}\bigg\{|X(t, x_0)|\leq h~~\textrm{for all}~~t\geq 0\bigg\} \geq 1 - \frac{\varepsilon}{5}.
	\end{eqnarray}
	From the proof of Theorem~\ref{thm2}, \eqref{probabilityofboundedprocess} implies,
	\begin{eqnarray*}
		\mathbb{P}\bigg\{\lim_{t\to\infty}|X(t, x_0)| = 0\bigg\} \geq 1 - \varepsilon.
	\end{eqnarray*}
	Since $\varepsilon$ is arbitrary, 
	\begin{eqnarray*}
		\mathbb{P}\bigg\{\lim_{t\to\infty}|X(t, x_0)| = 0\bigg\} = 1
	\end{eqnarray*}
	for all $x_0\in \bf R$. The proof is complete.	
\end{proof}

\begin{corr}\label{corollary1}
	If the trivial solution of the non-time-changed SDE~\eqref{classicsde} is stochastically stable (stochastically asymptotically stable, globally stochastically asymptotically stable), then the trivial solution of the time-changed SDE~\eqref{timesde} is stochastically stable (stochastically asymptotically stable, globally stochastically asymptotically stable), respectively.
\end{corr}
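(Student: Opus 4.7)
The plan is to deduce Corollary 3.1 directly from the duality theorem (Lemma 2.3) rather than repeating any Lyapunov argument. By Lemma 2.3, if $Y(t) = Y(t, x_0)$ is the unique solution of the non-time-changed SDE \eqref{classicsde}, then $X(t) := Y(E_t)$ is the unique solution of \eqref{timesde} with $X(0) = Y(0) = x_0$. The entire corollary therefore reduces to transferring each stability property along the random time change $E_t$, using two elementary facts: (i) the range of $E_\cdot$ is contained in $[0,\infty)$, so trajectories of $X$ sample values only from trajectories of $Y$; and (ii) $E_t \to \infty$ almost surely as $t\to\infty$, which follows from $E_t = \inf\{s>0 : U_\beta(s) > t\}$ together with the a.s.\ finiteness of the $\beta$-stable subordinator $U_\beta(s)$ at each $s$.

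For stochastic stability, I would fix $\varepsilon \in (0,1)$ and $r>0$, and pick $\delta = \delta(\varepsilon,r)>0$ from the stability of $Y$ so that $|x_0| < \delta$ implies $\mathbb{P}\{|Y(t,x_0)| < r \text{ for all } t\geq 0\} \geq 1 - \varepsilon$. The key observation is the pathwise inclusion
\begin{equation*}
\{\omega : |Y(s,x_0)(\omega)| < r \text{ for all } s\geq 0\} \;\subseteq\; \{\omega : |Y(E_t(\omega),x_0)(\omega)| < r \text{ for all } t\geq 0\},
\end{equation*}
so $\mathbb{P}\{|X(t,x_0)| < r \text{ for all } t\geq 0\} \geq 1-\varepsilon$ with the same $\delta$.

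For stochastic asymptotic stability, I would reuse the above $\delta$ to get stability of $X$, then take $\delta_0$ from the asymptotic stability of $Y$. On the event $\{\lim_{s\to\infty}Y(s,x_0) = 0\} \cap \{\lim_{t\to\infty} E_t = \infty\}$, which has probability at least $1-\varepsilon$ whenever $|x_0|<\delta_0$, the composition gives $\lim_{t\to\infty} X(t,x_0) = \lim_{t\to\infty} Y(E_t,x_0) = 0$. For global asymptotic stability, the same composition argument applies for every $x_0 \in \mathbf{R}$, since $\mathbb{P}\{\lim_{s\to\infty} Y(s,x_0) = 0\} = 1$ combined with $\mathbb{P}\{E_t \to \infty\} = 1$ gives $\mathbb{P}\{\lim_{t\to\infty} X(t,x_0) = 0\} = 1$.

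The only genuinely nontrivial input beyond the duality lemma is the a.s.\ divergence $E_t \to \infty$, which I would justify in one line: since $U_\beta(s) < \infty$ a.s.\ for every $s>0$, on the full-measure event where $U_\beta$ takes only finite values we have $E_t \geq s$ for all $t > U_\beta(s)$, and letting $s\to\infty$ gives the claim. Everything else is bookkeeping with the $\sigma$-subadditivity of $\mathbb{P}$ and the initial-value identity $X(0)=Y(0)=x_0$, so I do not anticipate any real obstacle.
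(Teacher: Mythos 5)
Your proposal is correct, and it reaches the conclusion by a cleaner transfer mechanism than the paper uses. Both you and the paper start from the duality lemma to identify $X(t,x_0)=Y(E_t,x_0)$, but the paper then transfers stochastic stability by conditioning on the value of $E_t$ and integrating against its density $f_{E_t}$ (invoking independence of $E$ and $B$ and the law of total probability), and simply says ``similarly'' for the two asymptotic statements. You instead use the pathwise inclusion $\{|Y(s,x_0)|<r \ \forall s\ge 0\}\subseteq\{|Y(E_t,x_0)|<r \ \forall t\ge 0\}$, which needs nothing beyond the fact that $E_t$ takes values in $[0,\infty)$ --- in particular it does not invoke independence, and it avoids the slightly informal step in the paper where one conditions on ``$E_t=\tau$'' inside an event quantified over all $t$ (what is really meant there is conditioning on the whole path of $E$). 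For the asymptotic cases you make explicit the composition argument on the event $\{\lim_{s\to\infty}Y(s,x_0)=0\}\cap\{\lim_{t\to\infty}E_t=\infty\}$, together with the one-line justification that $E_t\to\infty$ a.s.\ (a fact the paper itself uses in the proof of Theorem~3.2); this fills in precisely the part the paper leaves to ``similarly.'' The trade-off: the paper's density/total-probability computation fits its emphasis on the distributional structure of the inverse subordinator, while your argument is more elementary, fully pathwise, and spells out all three stability notions explicitly. One very minor caveat: the duality lemma asserts that $Y(E_t)$ \emph{satisfies} the time-changed SDE; to call it \emph{the} solution you should, as the paper implicitly does, appeal to the uniqueness statement of Lemma~2.1.
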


\begin{proof}
	Denote by $Y(t, x_0)$ the trivial solution of the non-time-changed SDE~\eqref{classicsde} . Since it is stochastically stable, for any pair of $\varepsilon\in(0, 1)$ and $h > 0$, there is a $\sigma = \sigma(\varepsilon, h) > 0$ such that 
	\begin{eqnarray*}
		\mathbb{P}\bigg\{|Y(t, x_0)| < h~~\textrm{for all}~~t \geq 0\bigg\} \geq 1 - \varepsilon
	\end{eqnarray*}
	whenever $|x_0| < \sigma$.
	
	By the Duality Lemma \ref{duality}, the process $X(t, x_{0}) := Y(E_t, x_{0})$ is the trivial solution of the time-changed SDE~\eqref{timesde}. Since $E_{t}$ is independence of Brownian motion $B_t$, by the law of total probability,
		\begin{eqnarray*}
			\begin{aligned}
			   \mathbb{P}\bigg\{|X(t, x_0)| < h~~\textrm{for all}~~t \geq 0\bigg\} &= 
			   \mathbb{P}\bigg\{|Y(E_t, x_0)| < h~~\textrm{for all}~~t \geq 0\bigg\}\\ 
			   &= \int_{0}^{\infty}	
			   \mathbb{P}\bigg\{|Y(E_t, x_0)| < h~~\textrm{for all}~~t \geq 0\bigg|E_t = \tau\bigg\} f_{E_t}(\tau)\mathrm{d}\tau\\
			   &=\int_{0}^{\infty}	
			   \mathbb{P}\bigg\{|Y(\tau, x_0)| < h~~\textrm{for all}~~\tau \geq 0\bigg\} f_{E_t}(\tau)\mathrm{d}\tau\\
			   & \geq 1 - \varepsilon,
			\end{aligned}
	 \end{eqnarray*}
	 which implies that the trivial solution to the time-changed SDE~\eqref{timesde} is stochastically stable. Similarly, we can apply the Duality Lemma~\ref{duality} and the law of total probability to show that the trivial solution of SDE~\eqref{timesde} is stochastically asymptotically stable and globally stochastically asymptotically stable.
\end{proof}

\section{Examples}
In this section, two examples are provided to illustrate the results of the theorems. The first example shows stochastic stability, stochastically asymptotic stability and globally stochastically asymptotic stability . The second example is to verify the deep connection between the stability of the time-changed SDE and that of its corresponding non-time-changed SDE.
\begin{exam}\label{example1inexamplesection}
	Consider a one-dimensional SDE driven by the time-changed Brownian motion
	\begin{eqnarray}\label{sdeinexample1inexamplescetion}
        \mathrm{d}X(t) = -\rho_{1}(t, E_{t})X(t)\mathrm{d}t + f_{1}(t, E_{t})X(t)\mathrm{d}E_{t} + g_{1}(t, E_{t})X(t)\mathrm{d}B_{E_{t}}
	\end{eqnarray}
	with initial value $X(0) = x_{0}$, where $\rho_{1}(t_{1}, t_{2}), f_{1}(t_{1}, t_{2})$ and $g_{1}(t_{1}, t_{2})$ are real-valued functions on $\bf R_{+}\times\bf R_{+}$ satisfying the following condition
	\begin{eqnarray*}
		|\rho_{1}(t_{1}, t_{2})| + |f_{1}(t_{1}, t_{2})| + |g_{1}(t_{1}, t_{2})| \leq L~\text{for all}~t_{1}, t_{2}\in \bf R_{+},
	\end{eqnarray*}
	with positive constant $L$. Also, assume $\rho_{1}(t_{1}, t_{2}), f_{1}(t_{1}, t_{2})$ and $g_{1}(t_{1}, t_{2})$ are measurable with respect to the filtration $\mathcal{G}_{t} = \mathcal{F}_{E_{t}}$. Define the Lyapunov function on $\bf R_{+}\times\bf R_{+}\times R$
	\begin{eqnarray*}
		V(t_{1}, t_{2}, x) = |x|^{\alpha}
	\end{eqnarray*}
	for some $\alpha\in(0, 1)$. Then,
	\begin{eqnarray*}
		L_{1}V(t, E_{t}, x) = -\alpha \rho_{1}(t, E_{t})|x|^{\alpha}
	\end{eqnarray*}
	and
	\begin{eqnarray*}
		L_{2}V(t, E_{t}, x) = -\alpha\bigg(\frac{1}{2}(1-\alpha)g^{2}(t, E_{t})- f_{1}(t, E_{t})\bigg)|x|^{\alpha}.
	\end{eqnarray*}
	Therefore, if 
	\begin{eqnarray}\label{conditionsofexample1inexamplesection}
	\rho_{1}(t, E_{t}) \geq 0\qquad\text{and}\qquad\frac{1}{2}(1-\alpha)g^{2}(t, E_{t}) - f_{1}(t, E_{t}) \geq 0
	\end{eqnarray}
	holds a.s. for all $t, E_{t}\in\bf R_{+}$, Theorem~\ref{thm1} implies that the trivial solution of SDE~\eqref{sdeinexample1inexamplescetion} is stochastically stable. If at least one of the inequalities holds strictly in~\eqref{conditionsofexample1inexamplesection}, Theorem~\ref{thm3} indicates that the trivial solution of SDE~\eqref{sdeinexample1inexamplescetion} is globally stochastically asymptotically stable.
\end{exam}
\begin{exam}
	Consider the following non-time-changed SDE
	\begin{eqnarray}\label{example2inexamplesection}
	    \mathrm{d}Y(t) = f(t, X(t))\mathrm{d}t + g(t, X(t))\mathrm{d}B_{t}
	\end{eqnarray}
	with initial value $X(0) = x_{0}\in\bf R$, where $B_{t}$ is the standard Brownian motion, and $f(t, x), g(t, x): \bf R_{+}\times\bf R\to \bf R$ have the expansions
	\begin{eqnarray}\label{exapansionsoffandginexamplesection}
	f(t, x) = a(t)x + o(|x|), \qquad g(t, x) = b(t)x + o(|x|)
	\end{eqnarray}
	in a neighborhood of $x = 0$ uniformly with respect to $t \geq t_{0}$, where $a(t), b(t)$ are all bounded Borel-measurable real-valued functions. Furthermore, assume the following condition is imposed on $a(t), b(t)$ 
	\begin{eqnarray}\label{conditionofintegralinexamplesection}
		-K \leq \int_{0}^{t}\bigg(a(s) - \frac{1}{2}b^{2}(s) + \theta\bigg)\mathrm{d}s \leq K\qquad\text{for all}~t\geq 0,
	\end{eqnarray}
	where $\theta$ and $K$ are positive. Also let
	\begin{eqnarray}\label{conditionofinequalityrelatedtothetaandalpha}
	    0 < \alpha < \frac{\theta}{\sup_{t\geq 0}b^{2}(t)}
	\end{eqnarray}
	and define the Lyapunov function
	\begin{eqnarray*}
	    V(t, x) = |x|^{\alpha}\exp\bigg(-\alpha\int_{0}^{t}\big(a(s) - \frac{1}{2}b^{2}(s) + \theta\big)\mathrm{d}s\bigg).
	\end{eqnarray*}
From Mao~\cite{mao2007stochastic}, under conditions~\eqref{exapansionsoffandginexamplesection} and~\eqref{conditionofintegralinexamplesection}, the trivial solution of non-time-changed SDE~\eqref{example2inexamplesection} is stochastically asymptotically stable. 

However, from condition~\eqref{exapansionsoffandginexamplesection}, the following time-changed type conditions on $f(E_{t}, x)$ and $g(E_{t}, x)$ still hold
	\begin{eqnarray}\label{exapansionsoftimechangedfandginexamplesection}
	f(E_{t}, x) = a(E_{t})x + o(|x|), \qquad g(E_{t}, x) = b(E_{t})x + o(|x|).
	\end{eqnarray}
On the other hand, from the following change of variables formula in~\cite{Jean1979}
\begin{eqnarray*}
     \int_{0}^{t}\phi(E_{s})\mathrm{d}E_{s} = \int_{0}^{E_{t}}\phi(s)\mathrm{d}s,
\end{eqnarray*}
the condition~\eqref{conditionofintegralinexamplesection} becomes the following time-changed type condition on $a(E_{t})$ and $b(E_{t})$
	\begin{eqnarray}\label{conditionoftimechangedintegralinexamplesection}
		-K \leq \int_{0}^{t}\bigg(a(E_{s}) - \frac{1}{2}b^{2}(E_{s}) + \theta\bigg)\mathrm{d}E_{s} = \int_{0}^{E_{t}}\bigg(a(s) - \frac{1}{2}b^{2}(s) + \theta\bigg)\mathrm{d}s\leq K\qquad\text{for all}~t\geq 0.
	\end{eqnarray}
Define the time-changed Lyapunov function
	\begin{eqnarray*}
		V(E_{t}, x) = |x|^{\alpha}\exp\bigg(-\alpha\int_{0}^{E_{t}}\big(a(s) - \frac{1}{2}b^{2}(s) + \theta\big)\mathrm{d}s\bigg).
	\end{eqnarray*}
By condition~\eqref{conditionoftimechangedintegralinexamplesection},
\begin{eqnarray*}
    |x|^{\alpha}\exp(-\alpha K) \leq V(E_{t}, x) \leq |x|^{\alpha}\exp(\alpha K).
\end{eqnarray*}
Apply the time-changed It\^o formula and compute $L_{2}V(E_{t}, x)$ similarly to~\cite{mao2007stochastic} to yield
\begin{eqnarray*}
	\begin{aligned}
		L_{2}V(E_{t}, t) &= \alpha|x|^{\alpha}\exp\bigg(-\alpha\int_{0}^{E_{t}}(a(s) - \frac{1}{2}b^{2}(s) + \theta)\mathrm{d}s\bigg)\times\bigg(\frac{\alpha}{2}b^{2}(E_{t}) - \theta)\bigg) + o(|x|^{\alpha}).\\
	\end{aligned}
\end{eqnarray*}
	Also since $E_{t}\to\infty$ as $t\to\infty$,
	\begin{eqnarray*}
		b^{2}(E_{t})\leq \sup_{t\geq 0}b^{2}(t),
	\end{eqnarray*}
	which together with ~\eqref{conditionofinequalityrelatedtothetaandalpha} and~\eqref{conditionoftimechangedintegralinexamplesection} implies that
	\begin{eqnarray*}
			L_{2}V(E_{t}, x)&\leq -\frac{1}{2}\alpha\theta\exp(-\alpha K)|x|^{\alpha} + o(|x|^{\alpha}).
	\end{eqnarray*}
	This means $L_{2}V (E_{t}, x)$ is negative-definite in a sufficiently small neighbourhood of $x = 0$ for $t\geq 0$. By Theorem~\ref{thm2}, the trivial solution of SDE~\eqref{example2inexamplesection} is stochastically asymptotically
	stable thereby verifying the Corollary~\ref{corollary1}.
\end{exam}
\section{Acknowledgment}
The author wishes to thank Dr. Marjorie Hahn for her advice, encouragement and patience with my research, and the author's peer Lise Chlebak for her useful discussions. 
\bibliography{mybibfile}

\end{document}